\newtheorem{theo}{Theorem}[section]
\newtheorem{prop}[theo]{Proposition}
\newtheorem{lemm}[theo]{Lemma}
\newtheorem{rem}[theo]{Remark}
\newcommand{\al}{\alpha}
\newcommand{\be}{\beta}
\newcommand{\Ga}{\Gamma}
\newcommand{\Om}{\Omega}
\newcommand{\ep}{\epsilon }
\newcommand{\te}{\theta}
\newcommand{\De}{\Delta}
\newcommand{\pa}{\partial}
\newcommand{\R}{{\bf R}^n}
\newcommand{\ri}{\rightarrow}
\newcommand{\Rn}{{\bf R}^{n-1}}
\newcommand{\na}{\nabla}
\begin{document}
\baselineskip=18pt
\title[Fractional Laplacian]{Boundary integral operator for the fractional Laplace equation in  a bounded Lipschitz domain
}

\author{TongKeun Chang}
\address{Department of Mathematics, yonsei University, Seoul, South Korea}
\email{chang7357@yonsei.ac.kr}

\thanks{The author was supported by the
National Research Foundation of Korea NRF-2010-0016699}

\maketitle

\begin{abstract}
We study  the  boundary integral operator induced from fractional
Laplace equation in a bounded Lipschitz domain. As an application,
we study the boundary value problem of a fractional Laplace
equation.

\vspace*{.125in}

 \noindent {\it Keywords: boundary integral operator, layer potential, fractional Laplacian, bounded Lipschitz domain.}

\vspace*{.125in}

\noindent {\it AMS 2010 subject classifications:} Primary 45P05 ,\;
Secondary; 30E25.
\end{abstract}

\section{Introduction}
\label{sec1}
\setcounter{equation}{0}

In this paper we study a  boundary integral operator defined on the
boundary of a  bounded Lipschitz domain. Let $\Om$ be a bounded
Lipschitz domain and
\begin{align}\label{riesz kernel}
\Ga_{2s}(x) = c(n,s) \frac1{|x|^{n-2s}}
\end{align}
 is the Riesz kernel of order $2s, \,\, 0 < 2s < n$  in $\R$.
The layer potential of a fractional Laplacian for  $\phi \in L^2(\pa
\Om)$ is defined by
\begin{align}\label{boundary integral operator}
{\mathcal S}_s \phi (x) = \int_{\pa \Om}   \Ga_{2s} (x - Q) \phi(Q) dQ.
\end{align}
The boundary integral operator $S_s\phi = ({\mathcal S}_s \phi)|_{\pa \Om}$ is defined by the  restriction  of  ${\mathcal S}_s \phi$.

M. Z$\ddot{a}$hle\cite{Z} studied the Riesz potentials in  a general
metric space $(X, \rho)$ with Ahlfors $d$-regular measure $\mu$. He
showed $S_s: L^2(X, d\mu) \ri L^2_{2s}(X, d\mu), \,\, 0 < 2s < d< n$
is invertible,  where $L^2(X, d\mu)$ is decomposed by null space
$N(S_s) $ and orthogonal compliment of $N(S_s)$, that is, $L^2(X,
d\mu) = N(S_s)   \otimes L^2_{2s}(X, d \mu)$.

When $2s =2$, $\Ga_2$ is fundamental solution of Laplace equation in
$\R$ and \eqref{boundary integral operator} is single layer
potential of Laplace equation. The single layer potential  and
boundary layer potential of Laplace equation  were studied by many
mathematicians to show the solution of boundary value problem of
Laplace equation in a bounded domain (see \cite{FJR}, \cite{FMM},
\cite{JK} and  \cite{V}).

In this paper, we show the bijectivity of boundary layer potential
of a fractional Laplace equation in some  distribution space. Our
main result in this paper is stated as the  following.

\begin{theo}\label{theo1}
For $\frac12 < s <1$,   $S_s : H^{-s + \frac12}(\pa\Om) \ri
H^{s-\frac12} (\pa \Om)$ is bijective.
\end{theo}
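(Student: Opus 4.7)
The plan is to prove bijectivity by a variational argument. I would introduce the bilinear form
\[
B(\phi,\psi) := \langle S_s\phi,\psi\rangle_{\pa\Om}
\]
(the duality pairing between $H^{s-\frac12}(\pa\Om)$ and $H^{-s+\frac12}(\pa\Om)$) and show that it is bounded, symmetric and coercive on $H^{-s+\frac12}(\pa\Om)$. Since $\Ga_{2s}$ is, up to a constant, the kernel of the Riesz potential $(-\De)^{-s}$, the operator $\mathcal{S}_s$ is $(-\De)^{-s}$ applied to the surface-supported distribution $\phi\,d\si$, and the Fourier side gives
\[
B(\phi,\phi) = c(n,s)\int_{\R} |\xi|^{-2s}\,|\widehat{\phi\,d\si}(\xi)|^2\,d\xi = c(n,s)\|\phi\,d\si\|^2_{H^{-s}(\R)},
\]
making both symmetry and nonnegativity manifest. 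The Lax--Milgram lemma (or the Riesz representation theorem applied to the equivalent inner product $B$) then identifies $\phi\mapsto B(\phi,\cdot)$ with an isomorphism $H^{-s+\frac12}(\pa\Om)\ri H^{s-\frac12}(\pa\Om)$, which is exactly the map $S_s$.

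Both boundedness and coercivity reduce to the norm equivalence
\[
\|\phi\,d\si\|_{H^{-s}(\R)} \;\sim\; \|\phi\|_{H^{-s+\frac12}(\pa\Om)},
\]
which is dual to the boundedness and surjectivity of the trace map $H^s(\R)\ri H^{s-\frac12}(\pa\Om)$. Indeed, for any $g\in H^{s-\frac12}(\pa\Om)$ there exists an extension $v\in H^s(\R)$ with $v|_{\pa\Om}=g$ and $\|v\|_{H^s}\le C\|g\|_{H^{s-\frac12}}$, so
\[
|\langle \phi,g\rangle_{\pa\Om}| = |\langle \phi\,d\si,v\rangle_{\R}| \le \|\phi\,d\si\|_{H^{-s}(\R)}\,\|v\|_{H^s(\R)}
\]
yields the inequality $\|\phi\|_{H^{-s+\frac12}}\le C\|\phi\,d\si\|_{H^{-s}}$, while boundedness of the trace gives the reverse inequality. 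Given this equivalence, boundedness of $S_s$ follows by factoring
\[
\phi \mapsto \phi\,d\si \mapsto (-\De)^{-s}(\phi\,d\si) \mapsto \bigl((-\De)^{-s}(\phi\,d\si)\bigr)\big|_{\pa\Om}
\]
through the chain $H^{-s+\frac12}(\pa\Om)\ri H^{-s}(\R)\ri H^s(\R)\ri H^{s-\frac12}(\pa\Om)$, where the middle arrow is an isometric isomorphism by Plancherel.

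The main obstacle is the trace theory on a Lipschitz boundary. The condition $s>\frac12$ is essential so that the trace $H^s(\R)\ri H^{s-\frac12}(\pa\Om)$ is well defined; boundedness and surjectivity of this trace on Lipschitz domains is classical (Jonsson--Wallin, Jerison--Kenig) but technically delicate, since one cannot simply rely on smooth cut-offs and must construct extensions adapted to the Lipschitz geometry. Once this trace/extension result is in hand, symmetry of $B$ is immediate from $\Ga_{2s}(x-Q)=\Ga_{2s}(Q-x)$, the Plancherel identity above supplies coercivity, and Lax--Milgram closes the argument.
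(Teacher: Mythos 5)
Your Lax--Milgram approach is correct in outline and genuinely different from the paper's. The paper proves bijectivity in three stages: injectivity of $S_s$ by composing $\mathcal{S}_s\phi$ with the Riesz potential $I_{2-2s}$ to reduce to the known invertibility of the Laplace single-layer $S_1$ (Lemma 5.1); closed range via a compactness argument using $H^{-s+2}(\Om)\hookrightarrow\hookrightarrow H^1(\Om)$ applied to $\mathcal{S}_1\Phi_k$ (Lemma 5.2); and dense range by showing $S_s^*$ is injective, using the Plancherel identity $\langle\phi, S_s\phi\rangle = \int_{\R}|\xi|^{2s}|\hat u|^2\,d\xi$ (eq. (4.4), which incidentally carries a sign typo corrected in the last display of Section 5). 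You observe that this same identity, rewritten as $\langle\phi, S_s\phi\rangle = c\,\|\phi\,d\si\|^2_{\dot H^{-s}(\R)}$, is a \emph{quantitative} coercivity estimate once one knows $\|\phi\,d\si\|_{H^{-s}(\R)}\gtrsim\|\phi\|_{H^{-s+\frac12}(\pa\Om)}$, which is precisely dual to the surjectivity of the Jonsson--Wallin trace $H^s(\R)\ri H^{s-\frac12}(\pa\Om)$; Lax--Milgram then gives bijectivity in one stroke. This is shorter and conceptually cleaner: it avoids the reduction to $S_1$, the compactness step, and the Fredholm bookkeeping, replacing them by the single input of bounded, onto trace on a Lipschitz boundary. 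Two points you should make explicit to close the argument: (i) the passage between the homogeneous norm $\dot H^{-s}(\R)$ (which Plancherel gives) and the inhomogeneous norm $H^{-s}(\R)$ (which the trace-duality gives) for the compactly supported distribution $\phi\,d\si$ requires $2s<n$ to control the low-frequency piece, and you use both directions of this equivalence (coercivity only needs the easy $|\xi|^{-2s}\ge(1+|\xi|^2)^{-s}$, but boundedness of $B$ through your factorization needs the reverse); (ii) the identity $\langle S_s\phi,\phi\rangle_{\pa\Om}=\langle u,\phi\,d\si\rangle_{\R}$ equating the boundary pairing with the full-space pairing should be justified by density from $\phi\in L^2(\pa\Om)$, which is essentially what the paper's Theorems 3.6 and 4.2--4.4 do. With those details supplied, your proof is complete and arguably simpler than the paper's.
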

Here,  the spaces $H^{-s + \frac12}(\pa\Om)$ and $ H^{s-\frac12} (\pa \Om)$ are defined in section \ref{sec2}.

The boundary integral operators (the single layer potential and the double layer potential) have been studied by
many mathematicians. The bijectivity of the  layer potentials has been used to show the existence of  solutions of partial differential equations in a
bounded Lipschitz domain or bounded Lipschitz cylinder (see
\cite{B1}, \cite{B2}, \cite{DKV}, \cite{FKV}, \cite{HL},  \cite{M}
and \cite{She}).

As in  many other literatures, we apply  the bijectivity of the
boundary  integral operator to the boundary value problem of
fractional Laplace equation in bounded  Lipschitz domain. The
fractional Laplacian of order $0 < 2s< 2$ of a function $v: \R \ri
{\bf R}$ is expressed by the formula
\begin{align*}
\De^s v (x) =  C(n,s) \int_{\R} \frac{v(x+y) - 2v (x) + v(x-y)}{|x
-y|^{n+2s}} dy,
\end{align*}
where $C(n,s)$ is some normalization constant.
The fractional Laplacian can also be defined as a pseudo-differential operator
\begin{align*}
\widehat{(-\De)^s  v}(\xi) = (2\pi  i| \xi|)^{2s} \hat{v} (\xi),
\end{align*}
where $\hat{ v}$ is the Fourier transform of $v$ in $\R$. In
particular, when $2s =2$  it is  naturally extended   to the Laplace
equation $\De v(x) = \sum_{1\leq i \leq n} \frac{\pa^2 }{\pa
x_i^2}v(x)$.

In this paper, we show that  the layer potential  defined by \eqref{boundary integral operator}  is in
$\dot{H}^s(\R) = \{ v \in L^2_{loc}(\R) \, | \,\| v \|_{\dot{H}^s (\R)}:
 = \int_{\R} |\xi|^{2s}|\hat v(\xi)|^2 d\xi < \infty \}$ (see theorem \ref{rem2})
and satisfies
$$
 \De^s v(x) = 0 \quad x \in \Rn \setminus \pa \Om
$$
(see \eqref{solution}).
Hence, from theorem \ref{theo1}, we obtain  the following theorem;
\begin{theo}\label{theo2-1}
Let $\frac12 < s <1$.  For given $f \in H^{s -\frac12}(\pa \Om)$,
the following equation
\begin{eqnarray}\label{main result}
\left\{\begin{array}{l} \De^s u =0 \quad \mbox{in} \quad \R \setminus \pa \Om,\\
u|_{\pa\Om} = f \in H^{s -\frac12}(\pa \Om),\\
\|u\|_{\dot{H}^s(\R)} \leq c \|f\|_{H^{s- \frac12}(\pa \Om)}.
\end{array}\right.
\end{eqnarray}
has a  unique solution. Furthermore, there exists  $\phi \in H^{-s
+\frac12}(\pa \Om)$ such that
\begin{align}\label{single layer solution}
u = {\mathcal S}_s \phi.
\end{align}
\end{theo}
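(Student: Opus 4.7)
The plan is to reduce the boundary value problem \eqref{main result} entirely to Theorem \ref{theo1}. For existence together with the representation \eqref{single layer solution}, I would set $\phi := S_s^{-1} f \in H^{-s+\frac12}(\pa\Om)$; this exists and obeys $\|\phi\|_{H^{-s+\frac12}(\pa\Om)} \le c\|f\|_{H^{s-\frac12}(\pa\Om)}$ by the bijectivity asserted in Theorem \ref{theo1}. Defining $u := {\mathcal S}_s\phi$, the earlier results of the paper (theorem \ref{rem2} and formula \eqref{solution}) give $u \in \dot{H}^s(\R)$ with $\De^s u = 0$ on $\R \setminus \pa\Om$ and the correct boundary trace $u|_{\pa\Om} = S_s\phi = f$. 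The bound $\|u\|_{\dot{H}^s(\R)} \le c \|f\|_{H^{s-\frac12}(\pa\Om)}$ then drops out of chaining the boundedness of ${\mathcal S}_s : H^{-s+\frac12}(\pa\Om) \to \dot{H}^s(\R)$ with the estimate on $\phi$.

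For uniqueness I would argue by a fractional energy identity. Let $w = u_1 - u_2$ be the difference of two candidate solutions, so $w \in \dot{H}^s(\R)$, $w|_{\pa\Om} = 0$, and $\De^s w = 0$ distributionally in $\R \setminus \pa\Om$. For any test function $\va \in C_c^\infty(\R \setminus \pa\Om)$, Plancherel's identity combined with the Fourier-side definition of $(-\De)^s$ gives
\begin{align*}
0 \;=\; \langle \De^s w, \va \rangle \;=\; c_s \int_{\R} |\xi|^{2s} \hat w(\xi) \overline{\hat \va(\xi)} \, d\xi.
\end{align*}
If $w$ can be approximated in the $\dot{H}^s(\R)$ seminorm by such test functions $\{\va_k\}$, then the right-hand side converges to a positive multiple of $\|w\|^2_{\dot{H}^s(\R)}$, forcing $w \equiv 0$. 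The representation $u={\mathcal S}_s\phi$ then follows because the solution constructed above already has this form and uniqueness identifies it.

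The main obstacle is the density step: that $\{v \in \dot H^s(\R) : v|_{\pa\Om} = 0\}$ coincides with the closure of $C_c^\infty(\R \setminus \pa\Om)$ in the $\dot{H}^s(\R)$ seminorm. For $\frac12 < s < 1$ the trace is a well-defined continuous surjection onto $H^{s-\frac12}(\pa\Om)$ on a Lipschitz boundary, but producing the approximants requires flattening $\pa\Om$ in Lipschitz coordinate patches, truncating in the transverse direction at scale $\ep \to 0$, and controlling the fractional Gagliardo-type contribution of the cutoff via a Hardy-type inequality near $\pa\Om$. Once this density is secured, existence, uniqueness, the norm bound, and the single-layer representation \eqref{single layer solution} all fall out of Theorem \ref{theo1}.
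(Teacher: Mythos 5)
Your argument is essentially the paper's own. The existence half — invert $S_s$ using Theorem \ref{theo1}, set $u = {\mathcal S}_s\phi$, and invoke Theorem \ref{rem2} together with \eqref{solution} for membership in $\dot H^s(\R)$ and for $\De^s u = 0$ off $\pa\Om$ — is line for line what the paper does, and the norm bound follows the same way by chaining $\| {\mathcal S}_s\phi\|_{\dot H^s} \le c\|\phi\|_{H^{-s+\frac12}(\pa\Om)} \le c\|f\|_{H^{s-\frac12}(\pa\Om)}$.

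For uniqueness the paper's proof is terser than yours but rests on the identical idea. Given a solution $u$ of \eqref{main result} with $u|_{\pa\Om}=0$, the paper writes directly
\begin{align*}
0 = \langle \De^s u, u\rangle = -\int_{\R} (2\pi i |\xi|)^{2s} |\hat u(\xi)|^2\, d\xi,
\end{align*}
asserting the left equality because $\De^s u \in \dot H^{-s}(\R)$ is supported on $\pa\Om$ while $u$ has zero trace there, then concludes $\hat u = 0$, hence $u$ is constant and so $u\equiv 0$ by the trace condition. You are right to observe that this first equality is not self-evident: pairing a distribution supported on $\pa\Om$ against $u$ and getting zero from $u|_{\pa\Om}=0$ is precisely the density statement you isolate, namely that $\{v \in \dot H^s(\R): v|_{\pa\Om}=0\}$ is the $\dot H^s$-closure of $C_c^\infty(\R\setminus\pa\Om)$ for $\frac12 < s < 1$ on a Lipschitz boundary. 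The paper does not prove this; it treats the energy identity as immediate. So your proposal is not a different route — it is the paper's route with the implicit lemma made explicit. Your framing of it as the ``main obstacle'' is accurate and is, if anything, more careful than the published argument; the Hardy-inequality/coordinate-flattening scheme you sketch is a standard way to supply that missing density step, though the paper itself supplies nothing.
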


The rest of the paper is organized as follows. In section
\ref{sec2}, we introduce several function spaces.

In section
\ref{sec3}, we will define the layer potential and the boundary
layer potential. The layer potential for $\phi \in H^{-\al}(\pa \Om)$ is defined by
\begin{align*}
{\mathcal S}_s \phi (x) = <\phi, \Ga_{2s} (x - \cdot)>,
\end{align*}
where $\Ga_s $ is Riesz kernel of order $2s$ defined in \eqref{riesz
kernel}  and $<\cdot, \cdot> $ is the duality paring between
$H^{\al} (\pa \Om) $ and $H^{-\al}(\pa \Om)$.  In particular, if
$\phi \in L^2(\pa \Om)$, then ${\mathcal S}_s \phi$ is defined by
\eqref{boundary integral operator}. The boundary layer potential
$S_s \phi$ for $\phi \in H^{-s +\frac12}(\pa \Om)$ is defined by
$S_s\phi = ({\mathcal S}_s \phi)|_{\pa \Om}$, where $F|_{\pa\Om}$ is
the restriction on $\pa \Om$ of  the function $F$ defined in $\R$.

In section \ref{sec3-1}, we study several properties of layer
potential.

In section \ref{sec5}, we   show the bijectivity of the boundary
layer potential $S_s : H^{-s\ +\frac12} (\pa \Om) \ri H^{s +
\frac12}(\pa \Om)$. 

The  probability is another  tool to represent the solution of a
fractional Laplace equation. Let $X_t$ be a $\al$-stable process on
$\R$ and $\tau_\Om = \inf\{ t> 0\, | \, X_t \notin \Om\}$. Note that
$X_t$ is discontinuous and $P\{\tau_\Om \in \pa \Om\} =0$  (see
\cite{B} and \cite{CS}). Hence, to represent the solution  in $\Om$
with probability of  a fractional Laplace equation,  we  need
information on $\Om^c$. Let  $\phi \in C^\infty(\Om^c)$ and define
function
\begin{align}\label{probability solution}
 u(x) = \left\{ \begin{array}{ll}
E_x \phi(X_{\tau_\Om}) & \quad x \in \Om, \\
\phi(x) & \quad x \in \Om^c,
\end{array}
\right.
\end{align}
where $E_x$ denote an expectation with respect to $P^x$  of the process
starting from $x \in \Om$. Then,  $u$ is a solution of
\begin{align*}
\left\{\begin{array}{ll}
\De^s  u =0 & \quad  \mbox{in} \quad \Om, \\
  u  = \phi  & \quad  \mbox{in} \quad \Om^c
\end{array}
\right.
\end{align*}
(see \cite{B} and \cite{CS}). Compared with our result, function
\eqref{probability solution} is a solution of a fractional Laplace
equation  in $\Om$ with information on $\Om^c$ and  function
\eqref{single layer solution} is a solution of factional Laplacian
in $\R \setminus \pa \Om$ with information on $\pa \Om$.

\section{Function spaces}
\setcounter{equation}{0}
\label{sec2}

In this paper, we consider   a   bounded Lipschitz
domain $\Om$ in $\R$.
The letters $x, y$ denote the  points in $\R$, and the
letters $P,Q$ denote the  points on the
 boundary $\pa \Om$ of the domain $\Om$. The letter $c$ denotes positive constant depending only on $n, \,\, s  $ and $\Om$.

For $ 0 < \al < 1$, we define the  Sobolev spaces $H^\al(\pa \Om)$ and $H^\al(\pa \Om)$ as
\begin{align*}
H^\al(\pa \Om) &=
\{\phi \in L^2(\pa \Om) \,\, | \,\, \int_{\pa \Om} \int_{\pa \Om}
\frac{|\phi(P) - \phi(Q)|^2}{|P - Q|^{n-1 + 2\al}} dPdQ < \infty \},\\
H^\al( \Om) &=
\{\phi \in L^2(\Om) \,\, | \,\, \int_{ \Om} \int_{ \Om}
\frac{|\phi(x) - \phi(y)|^2}{|x-y|^{n + 2\al}} dydx < \infty \}.
\end{align*}
We define  $H^{-\al} (\pa \Om) = (H^\al (\pa \Om))^*$ by
   dual spaces of  $H^\al(\pa \Om)$.  For $ 0 <\al < 1$, Sobolev spaces  $H^\al(\R ), \,\, H^\al(\Om)$ are defined in a similar manner.

 Now, we define homogeneous Sobolev space $\dot{H}^{\al}(\R)$.
For $\al \in {\bf R}$, the homogeneous Sobolev space  $\dot{H}^\al
(\R)$ is set of distributions   satisfying
\begin{equation}\label{homogeneous norm}
\| v\|^2_{\dot{H}^\al(\R)} : =\int_{{\bf R}^n}|\xi|^{2\al}|\hat{ v}(\xi)|^2 d\xi,
\end{equation}
where $\hat{v}$ means Fourier transform of $v$ in $\R$.
\begin{rem}
Let $0 < \al < 1$.
\begin{itemize}
\item[(1)]
By simple calculation,   for $v \in \dot{H}^\al(\R)$,  we obtain
\begin{align} \label{homogeneous norm2}
\int_{{\bf R}^n} \int_{{\bf R}^n}
\frac{|v(x )-   v(y)|^2}{|x-y|^{n + 2\al}} dydx =C\int_{{\bf R}^n}|\xi|^{2\al}|\hat{v}(\xi)|^2 d\xi.
\end{align}
\item[(2)]
For $v \in \dot{H}^\al  (\R)$, we get $\De^s v \in \dot{H}^{ \al
-2s} (\R)  $. In particular, if $ 0 < \al<  2s$, then, $\De^s v  \in
\dot{H}^{\al - 2s}(\R)   =  ( \dot{H}^{\al -2s}(\R))^*$ is defined
by
\begin{align}
< \De^s v, \psi> = \int_{\R} (2\pi i |\xi|)^{2s} \hat{v}(\xi) \bar {\hat{\psi}} (\xi) d \xi
\end{align}
for $\psi \in \dot{H}^{2s -\al}(\R)$.

\end{itemize}

\end{rem}

\section{Layer potential and boundary layer potential}
\setcounter{equation}{0}
\label{sec3}

Given $\phi \in  H^{-\al} (\pa \Om), \,\, \al \geq 0$,  we define layer potential by
\begin{eqnarray}\label{single1}
{\mathcal S}_s  \phi(x) =  < \phi, \Ga_{2s}(x - \cdot)>
                  \quad  x \in \R \setminus \pa \Om,
\end{eqnarray}
where $<\cdot, \cdot> $ is the duality paring between $H^{-\al}(\pa
\Om)$ and $H^{\al}(\pa \Om)$. In particular, if $\phi \in L^2(\pa
\Om)$, then
\begin{align}\label{definition}
{\mathcal S}_s \phi (x) = \int_{\pa \Om} \Ga_{2s} (x -Q) \phi(Q) dQ, \quad x \in  \R \setminus \pa \Om.
\end{align}
Note that ${\mathcal S}_s \phi$ is in $C^\infty(\R \setminus \pa
\Om)$. For $0 < \al $  and for  large $|x|$, we have
\begin{align}\label{behavior2}
|\na^\be {\mathcal S}_s(x)| \leq \|\phi\|_{H^{-\al} (\pa \Om)} \|\na^\be \Ga_{2s} (x -
\cdot)\|_{H^\al(\pa \Om)} \leq c \|\phi\|_{H^{-\al} (\pa \Om)}
\frac{1}{|x|^{n-2s+ |\be|}},
\end{align}
where $\be = (\be_1, \be_2, \cdots, \be_n)  \in   ( {\bf N} \cup \{ 0 \} )^n$ with $|\be| = \sum_{k} \be_k$.

We will use the following proposition late on (see \cite{JW}).
\begin{prop} \label{trace}
For $0 < s < 1$, the operator
${\mathcal R} : H^{s + \frac12}(\R) \ri H^{s } (\pa \Om)$ defined by $
{\mathcal R} (F) = F|_{\pa \Om}$ is bounded. That is, there is
constant $c> 0$ such that
\begin{eqnarray*}
\|{\mathcal R} (F)\|_{H^{s} (\pa \Om)} \leq c\|F\|_{H^{s +\frac12}
(\R)},
\end{eqnarray*}
where $c > 0$ depend only on  $n, \,\, s $ and $\Omega$ .
\end{prop}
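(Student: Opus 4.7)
The plan is to proceed by the standard localization-and-flattening argument, reducing to the classical trace theorem from $H^{s+\frac12}(\R)$ to $H^s(\Rn)$ on a half-space. Since $\pa \Om$ is compact and Lipschitz, I would first cover it by finitely many balls $U_1,\dots,U_N$ in which, after a rigid motion, $\pa \Om \cap U_j$ is the graph of a Lipschitz function $\va_j : \Rn \ri {\bf R}$, together with a subordinate $C^\infty_c$ partition of unity $\{\chi_j\}$. Since multiplication by each $\chi_j$ is bounded on $H^{s+\frac12}(\R)$ and the Slobodeckij double integral on $\pa \Om$ is subadditive with respect to such a decomposition, it suffices to estimate $\|(\chi_j F)|_{\pa \Om}\|_{H^s(\pa \Om)}$ by $\|\chi_j F\|_{H^{s+\frac12}(\R)}$ for each $j$.

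Next I would flatten each patch via the bi-Lipschitz chart $\Phi_j(y',y_n) := (y', y_n + \va_j(y'))$, which maps $\{y_n = 0\}$ onto the relevant piece of $\pa \Om$. Setting $G_j := (\chi_j F) \circ \Phi_j$, the bi-Lipschitz character of $\Phi_j$ yields $\|G_j\|_{H^{s+\frac12}(\R)} \leq c\|\chi_j F\|_{H^{s+\frac12}(\R)}$. Then I would apply the classical flat-boundary trace theorem, which is proved by Fourier analysis: for $g(x') := G_j(x',0)$ one has $\hat g(\xi') = \int_{\bf R} \hat G_j(\xi',\xi_n)\,d\xi_n$, and Cauchy--Schwarz against the weight $(1+|\xi|^2)^{-(s+\frac12)}$ gives
\begin{align*}
(1+|\xi'|^2)^s |\hat g(\xi')|^2 \leq c \int_{\bf R} (1+|\xi|^2)^{s+\frac12} |\hat G_j(\xi',\xi_n)|^2\,d\xi_n,
\end{align*}
where the auxiliary one-dimensional integral $\int (1+\eta^2)^{-(s+\frac12)}\,d\eta$ is finite precisely because $s > 0$. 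Integrating in $\xi'$ yields $\|g\|_{H^s(\Rn)} \leq c \|G_j\|_{H^{s+\frac12}(\R)}$.

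The last step is to transfer this estimate back to the curved boundary. For points $P = (P',\va_j(P'))$ and $Q = (Q',\va_j(Q'))$ on the graph, the Lipschitz bound on $\va_j$ gives $|P'-Q'| \leq |P-Q| \leq c|P'-Q'|$, and the surface measure on the graph is comparable to Lebesgue measure in the parameter $P'$. Consequently the Slobodeckij double integral on $\pa \Om \cap U_j$ is equivalent to that on $\Rn$ applied to $g$, and combining with the previous step closes the loop. The $L^2(\pa \Om)$ component of the norm is controlled in the same way, noting that $s+\frac12 > \frac12$ so the $L^2$-trace is handled by the $s=0$ instance of the flat-boundary theorem.

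The main obstacle is the bi-Lipschitz invariance of $H^{s+\frac12}(\R)$ in the range $s \geq \frac12$, where $s+\frac12 \geq 1$ and the direct Slobodeckij integral characterization is no longer available. One resolves this by writing $H^{s+\frac12}(\R) = \{u \in H^1(\R) : \nabla u \in H^{s-\frac12}(\R)\}$ and using that Lipschitz functions are bounded pointwise multipliers on $H^{s-\frac12}(\R)$ for $s - \frac12 \in (0,1)$, so that the chain-rule identity for $u \circ \Phi_j$ keeps its gradient in $H^{s-\frac12}$. Alternatively, and presumably as in \cite{JW}, one can bypass flattening altogether by proving the trace statement directly on the $(n-1)$-set $\pa \Om$ via the atomic/Besov characterization on $d$-sets, which handles the whole range $0 < s < 1$ uniformly.
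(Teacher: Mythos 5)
The paper itself offers no proof of Proposition \ref{trace}; it simply cites \cite{JW}. So your proposal is an independent proof attempt, and I compare it against the intrinsic Jonsson--Wallin approach that the citation refers to.

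Your localization--flattening strategy is the natural first try, and it is correct and complete in the sub-range $0 < s \leq \frac12$ (equivalently $s+\frac12 \leq 1$): there the Slobodeckij double integral characterizes $H^{s+\frac12}$, which is manifestly bi-Lipschitz invariant, and at $s=\frac12$ the chain-rule factor $D\Phi_j$ only needs to be in $L^\infty$ to preserve $L^2$ at the gradient level. The genuine gap is in your treatment of $\frac12 < s < 1$. There you write $\nabla(u\circ\Phi_j) = (D\Phi_j)^T \bigl((\nabla u)\circ\Phi_j\bigr)$ and invoke the fact that ``Lipschitz functions are bounded pointwise multipliers on $H^{s-\frac12}$.'' That multiplier statement is correct, but it does not apply here: $\Phi_j(y',y_n) = (y', y_n + \va_j(y'))$ with $\va_j$ merely Lipschitz, so $D\Phi_j$ involves $\nabla\va_j$, which is only an $L^\infty$ function, not Lipschitz. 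Multiplication by a generic $L^\infty$ function is \emph{not} bounded on $H^\tau$ for any $\tau > 0$ (a rapidly oscillating sign pattern boosts Fourier support), so the step $\nabla(u\circ\Phi_j) \in H^{s-\frac12}$ is not justified, and the bi-Lipschitz invariance of $H^{s+\frac12}$ fails to follow from the argument as written. This is not a technicality: the sharpness of the Lipschitz range $\sigma \in (\frac12,\frac32)$ for traces is tied precisely to this obstruction, and the standard fixes (Ne\v{c}as-type regularization of $\va_j$, or the Dahlberg--Kenig--Stein pullback via a smoothed distance) are substantial extra ideas which your sketch does not supply.

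Your closing remark is the right one: the reference \cite{JW} (Jonsson--Wallin) proves the trace and extension theorems directly on $d$-sets, using Whitney decompositions and an intrinsic Besov characterization on $\pa\Om$, with no flattening and no chain rule, and this covers the whole range $0 < s < 1$ in one stroke. That is what the paper actually relies on. If you wish to keep the flattening route self-contained, you must either restrict it to $s \leq \frac12$ and supply a separate interpolation or regularization argument for $s > \frac12$, or replace $\Phi_j$ by a bi-Lipschitz map that is $C^\infty$ off the boundary (pullback via regularized distance) so that the chain-rule factor has the needed smoothness.
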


\begin{rem}\label{rem}
Let $U$ be a  Lipschitz bounded open subset of $\R$ such that $\pa \Om
\subset U$. For $F \in H^{s +\frac12} (U)$, let us $\tilde F \in H^{s + \frac12}(\R)$ be a
Stein's extension (see proposition 2.4  in \cite{JK}).
That is, $\tilde F|_U = F$ and $\|\tilde F\|_{H^{s +\frac12}(\R)} \leq c \| F\|_{H^{s +\frac12} (U)}$,
where $c$ depends only on  $s$ and $ U$.  Then, by the
proposition \ref{trace}, we get $\| F|_{\pa  \Om} \|_{H^s (\pa \Om)} \leq
c \|\tilde F\|_{H^{s +\frac12}(\R)} \leq c \| F\|_{H^{s +\frac12} (U)}$.
\end{rem}

We introduce a Riesz potential $I_s$, $0  < s < n$, by
$$
I_s \psi(x) = c(n,s) \int_{\R} \frac{1}{|x -y|^{n-s}}
\psi(y) dy\mbox{ for }\psi
\in C^\infty_c (\R).
$$
The following proposition is well known fact and will be useful in the subsequent estimates (see chapter 5 of \cite{St}).
\begin{prop}\label{prop2}
\begin{itemize}

\item[1).] Let $1 \leq p < q < \infty, \,\, \frac1q = \frac1p -
\frac{s}{n}$. Then
$$
I_s:L^p (\R) \ri L^q(\R)
$$
is bounded.

\item[2).] The Fourier transform of $c(n, s)|x|^{-n + s}$ is  $
|\xi|^{-s}$, in the sense that
$$
\int_{\R} c(n,s)|x|^{-n + s} \psi (x) dx = \int_{\R}
|\xi|^{-s } \overline{\hat \psi(\xi)} d\xi.
$$
Hence,  for $\psi \in C_c^\infty(\R) $, $\hat{I_s \psi  } (\xi) = |\xi|^{-s} \hat{ \psi}(\xi )   $ for $\xi \in \R$.

\item[3).] Let $ 0< s_1, \, s_2< n  \quad
s_1 + s_2 < n.$ Then
$$
c(n,s_1) c(n,s_2) \int_{\R} \frac{1}{|x-y|^{n- s_1}}
\frac{1}{|y|^{n- s_1}} dy = c(n, s_1 + s_2) \frac{1}{|x|^{n
-s_1 -s_2}}.
$$
\end{itemize}
\end{prop}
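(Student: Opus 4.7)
The plan is to prove each of the three items separately, relying on standard harmonic-analytic techniques found in chapter 5 of Stein's book, since the proposition is cited there as well known.

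For item 1 (Hardy-Littlewood-Sobolev), I would first establish a weak-type $(p,q)$ estimate and then invoke Marcinkiewicz interpolation. To get the weak-type bound, split the Riesz kernel at radius $R>0$:
\begin{align*}
I_s\psi(x) = \int_{|x-y|<R}\frac{\psi(y)}{|x-y|^{n-s}}dy + \int_{|x-y|\ge R}\frac{\psi(y)}{|x-y|^{n-s}}dy.
\end{align*}
By a dyadic annulus decomposition, the first integral is bounded by $cR^sM\psi(x)$, where $M$ is the Hardy-Littlewood maximal function; by H\"older's inequality, the second is bounded by $cR^{s-n/p}\|\psi\|_{L^p}$. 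Optimizing $R$ yields the pointwise bound
\begin{align*}
|I_s\psi(x)| \le c\,(M\psi(x))^{p/q}\|\psi\|_{L^p}^{1-p/q}.
\end{align*}
Combined with the weak $(1,1)$ and strong $(p,p)$ bounds on $M$, this produces the weak $(p,q)$ estimate, and Marcinkiewicz interpolation between two such endpoints upgrades it to the desired strong $(p,q)$ bound.

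For item 2, I would use the subordination identity
\begin{align*}
\frac{1}{|x|^{n-s}} = \frac{1}{\Gamma((n-s)/2)}\int_0^\infty t^{(n-s)/2-1}e^{-t|x|^2}dt,
\end{align*}
pair against a Schwartz test function, interchange the $t$-integral with the Fourier transform, and apply the Gaussian Fourier identity. After a change of variable, the remaining $t$-integral is recognized as a gamma-function representation of $|\xi|^{-s}$; tracking the gamma factors pins down the constant $c(n,s)$. Item 3 then follows in one line: the Fourier transform sends the convolution on the left to the product $|\xi|^{-s_1}|\xi|^{-s_2}=|\xi|^{-(s_1+s_2)}$, which by item 2 is the Fourier transform of $c(n,s_1+s_2)|x|^{-(n-s_1-s_2)}$, so Fourier uniqueness finishes the identification.

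The main obstacle is item 1: the HLS estimate requires the kernel-splitting trick above combined with the non-trivial weak $(1,1)$ boundedness of the maximal function and Marcinkiewicz interpolation. Items 2 and 3 reduce to standard Gaussian Fourier computations and Fourier inversion, so the whole proof is really a sketch pointing to Stein rather than an independent argument.
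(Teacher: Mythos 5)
The paper offers no proof of this proposition at all---it is stated as a well-known fact with a citation to Chapter 5 of Stein's book---and your sketch correctly reproduces exactly the standard arguments given there (kernel splitting plus the maximal function for Hardy--Littlewood--Sobolev, the Gaussian subordination formula for the Fourier transform of the Riesz kernel, and Fourier uniqueness for the composition identity). The only caveat worth recording is that the strong $(p,q)$ bound in item 1) actually fails at the endpoint $p=1$ (only the weak-type estimate survives there), but the paper only ever uses $p=2$, so this does not affect anything downstream.
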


\begin{lemm}\label{lemma2}
Let $B_R$ be the open ball in $\R$ centered at the  origin with radius $R$.
Then, the integral operator $I^R_{2s}: C^\infty_c (B_R) \ri H^{\al}(B_R)$ defined by
\begin{align*}
I^R_{2s} F(x)
=  \int_{B_R} \Ga_{2s}(x-y)  F(y) dy
\end{align*}
is continuously  extended to $I^R_{2s}:H^{-\al}_0(B_R)\ri H^{-\al + 2s }(B_R), \,\, 0 \leq \al \leq 2s$, where
$H^{-\al}_0(B_R) = (H^\al(B_R))^*$.
\end{lemm}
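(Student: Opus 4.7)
The plan is to reduce the claim to a Fourier--multiplier estimate on $\R$ by exploiting the fact that the relevant convolution is actually local. By density it suffices to prove the bound for $F\in C_c^\infty(B_R)$. Extend such an $F$ by zero to $\R$; since restriction $H^\al(\R)\to H^\al(B_R)$ is bounded, its adjoint (which is just zero extension) gives $\|F\|_{H^{-\al}(\R)}\le c\|F\|_{H^{-\al}_0(B_R)}$. Choose a cutoff $\eta\in C_c^\infty(\R)$ with $\eta\equiv 1$ on $\overline{B_{2R}}$ and decompose $\Ga_{2s}=K_1+K_2$ with $K_1=\eta\Ga_{2s}$ and $K_2=(1-\eta)\Ga_{2s}$. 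For $x,y\in B_R$ one has $|x-y|<2R$, so $K_2(x-y)=0$ and therefore $I^R_{2s}F(x)=(K_1*F)(x)$ on $B_R$. Since restriction $H^{-\al+2s}(\R)\to H^{-\al+2s}(B_R)$ is bounded (note $-\al+2s\in[0,2s]$), it suffices to establish
\[
\|K_1*F\|_{H^{-\al+2s}(\R)}\le c\|F\|_{H^{-\al}(\R)}.
\]

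By Plancherel this is implied by the pointwise symbol bound
\[
|\widehat{K_1}(\xi)|\le c(1+|\xi|^2)^{-s},\qquad \xi\in\R,
\]
since then $\int_{\R}(1+|\xi|^2)^{-\al+2s}|\widehat{K_1}(\xi)|^2|\hat F(\xi)|^2\,d\xi\le c\int_{\R}(1+|\xi|^2)^{-\al}|\hat F(\xi)|^2\,d\xi=c\|F\|_{H^{-\al}(\R)}^2$, uniformly in $\al\in[0,2s]$ with no need for interpolation.

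To prove the symbol bound I split into $|\xi|\le 1$ and $|\xi|\ge 1$. Since $2s<n$, $\Ga_{2s}\in L^1_{\mathrm{loc}}(\R)$, so $K_1\in L^1(\R)$ and $\widehat{K_1}$ is bounded, which handles the low frequencies. For $|\xi|\ge 1$ I invoke Proposition \ref{prop2}(2) to write $\widehat{\Ga_{2s}}(\xi)=c|\xi|^{-2s}$ as tempered distributions, so that $\widehat{K_1}(\xi)=c|\xi|^{-2s}-\widehat{K_2}(\xi)$. The function $K_2$ belongs to $C^\infty(\R)$ (the cutoff removes the origin singularity) and $|\pa^\be K_2(x)|\le c_\be|x|^{-n+2s-|\be|}$ for large $|x|$; picking $|\be|>2s$ makes $\pa^\be K_2\in L^1(\R)$, and integration by parts gives $|\xi^\be\widehat{K_2}(\xi)|\le c_\be$. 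Hence $\widehat{K_2}$ decays faster than any polynomial at infinity, so $|\widehat{K_1}(\xi)|\le c|\xi|^{-2s}\le c(1+|\xi|^2)^{-s}$ for $|\xi|\ge 1$.

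The main conceptual obstacle is that the global Fourier multiplier $|\xi|^{-2s}$ is singular at the origin, which would make a naive Plancherel estimate on $\R$ fail whenever $\al>0$ because $|\xi|^{-4s}(1+|\xi|^2)^{-\al+2s}$ need not be locally integrable near $0$. Localization through $\eta$ is what bypasses this: it replaces the singular multiplier by a genuine symbol of order $-2s$, at the cost of proving the rapid-decay estimate on $\widehat{K_2}$ that makes the difference between the two multipliers negligible at high frequencies. That high-frequency computation is the only nontrivial step in the argument.
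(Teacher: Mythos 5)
Your proof is correct, and it takes a genuinely different route from the paper's. The paper establishes exactly two endpoint mapping properties: $\al=0$ follows from Proposition~\ref{prop2}, parts 1) and 2) (Fourier side for the $\dot H^{2s}$ seminorm, Hardy--Littlewood--Sobolev plus boundedness of $B_R$ for the $L^2$ part), and $\al=2s$ then follows by the duality argument that exploits the formal self-adjointness of $I^R_{2s}$; the intermediate range $0<\al<2s$ is filled in by real interpolation, invoking the identifications $(H^{-2s}_0(B_R),L^2(B_R))_\theta=H^{-2s(1-\theta)}_0(B_R)$ and $(L^2(B_R),H^{2s}(B_R))_\theta=H^{2s\theta}(B_R)$ from \cite{JK}. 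You instead truncate the kernel so that $I^R_{2s}$ agrees on $B_R$ with convolution against a compactly supported $K_1=\eta\Ga_{2s}$, prove the uniform symbol bound $|\widehat{K_1}(\xi)|\lesssim(1+|\xi|^2)^{-s}$ (low frequencies from $K_1\in L^1$, high frequencies by writing $\widehat{K_1}=c|\xi|^{-2s}-\widehat{K_2}$ and establishing rapid decay of $\widehat{K_2}$ via integration by parts on the smooth tail $K_2$), and then Plancherel gives the estimate for all $\al\in[0,2s]$ at once. What your approach buys is that it avoids both the duality step and the interpolation identities from \cite{JK}, handling the whole range of exponents with a single multiplier estimate; what it costs is the kernel-truncation bookkeeping and the symbol decay argument. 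Both are valid; yours is arguably more self-contained, while the paper's is the more standard potential-theoretic template (endpoints plus interpolation) and so fits the machinery already set up elsewhere in the paper.

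One minor point worth making explicit: in concluding rapid decay from $|\xi^\be\widehat{K_2}(\xi)|\le c_\be$ you should take the maximum over all $\be$ with $|\be|=k$ (rather than a single $\be$), since a fixed monomial $\xi^\be$ can vanish at large $|\xi|$; this is the standard fix and does not affect the argument. Also, the formal manipulation $\widehat{K_1}=\widehat{\Ga_{2s}}-\widehat{K_2}$ is initially an identity of tempered distributions, but your integration-by-parts argument shows $\widehat{K_2}$ is in fact a continuous function away from the origin, so the pointwise identity is justified on $|\xi|\ge 1$, which is all you need.
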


\begin{proof}
By  2) of proposition \ref{prop2}, we have $I_{2s}: L^2(\R) \ri \dot{H}^{2s}(\R)$ is continuous.
By 1) of proposition \ref{prop2}, this implies that $I^R_{2s} : L^2(B_R) \ri H^{2s}(B_R)$ is a bounded operator. 
Then, for $\phi \in C^\infty_c(B_R), \,\, F \in L^2(B_R)$, we get
\begin{align*}
\int_{B_R} F(x) I^R_{2s} \phi(x) dx & = \int_{B_R} I^R_{2s} F(x) \phi(x)dx\\
 & \leq \| \phi\|_{H^{-2s}_0(B_R)} \| I^R_{2s} F\|_{H^{2s}(B_R)}\\
 & \leq c\| \phi\|_{H^{-2s}_0(B_R)} \|  F\|_{L^2(B_R)}.
\end{align*}
This implies $ \|I^R_{2s} \phi\|_{L^2(B_R)} \leq c \| \phi\|_{H^{-2s}_0(B_R)}$.
Since $C^\infty_c(B_R) $ is a dense subset of $H^{-2s}_0(B_R)$ (see remark 2.7 in \cite{JK}),
we get lemma \ref{lemma2} for $\al = 2s$.
Note that $H^{-2s(1-\te)}_0(B_R)$ and $H^{ 2s\te}(B_R)$ are real interpolation spaces with
$(H^{-2s}_0(B_R), L^2(B_R))_\te = H_0^{-2s (1-\te)}(B_R)$ and
$(L^2(B_R), H^{2s}(B_R))_\te = H^{ 2s \te}(B_R), \,\, 0 < \te < 1$ (see proposition 2.4 and remark 2.7 in  \cite{JK}).
Taking $\te =\frac{2s -\al}{2s}$, we get lemma \ref{lemma2} for $0 \leq \te\leq 1$.
\end{proof}

\begin{lemm}\label{lemm3}
 Let $\phi \in L^2(\pa \Om)$ and  $u  = {\mathcal S}_s \phi$  be defined by \eqref{definition}.
 Fix $\ep > 0$ and define $\Om^\ep = \{ x \in \R \, | \, d(x,\pa \Om) < \ep \}$.  Then for $ F \in C^\infty_c (B_R) $ we have
\begin{eqnarray}\label{l2}
\int_{B_R \setminus \Om^\ep} F(x) u(x) dx= \int_{\pa \Om} \phi(Q) \int_{B_R \setminus \Om^\ep}F(x) \Ga_{2s} (x- Q) dx  dQ,
\end{eqnarray}
where $d(x, \pa \Om)$ is distance between $x$ and $\pa \Om$, and $B_R$ is the open ball  centered at the  origin with radius
$R$ such that $\Om \subset B_{\frac12 R}$.
\end{lemm}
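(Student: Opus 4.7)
The plan is to prove the identity by a direct application of Fubini's theorem to the double integral $\int\int F(x)\phi(Q)\Ga_{2s}(x-Q)\,dx\,dQ$. The essential observation is that $x \in B_R \setminus \Om^\ep$ means $d(x,\pa\Om) \geq \ep$, so for every $Q \in \pa\Om$ we have $|x-Q| \geq \ep$, and hence
\begin{align*}
|\Ga_{2s}(x-Q)| \leq \frac{c(n,s)}{\ep^{n-2s}}
\end{align*}
uniformly on $(B_R \setminus \Om^\ep) \times \pa\Om$. This is what makes the singular kernel harmless once we stay at least $\ep$ away from $\pa\Om$.

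Next I would verify absolute integrability of $F(x)\phi(Q)\Ga_{2s}(x-Q)$ on the product space $(B_R \setminus \Om^\ep) \times \pa\Om$. Since $F \in C^\infty_c(B_R)$ is bounded, $\Ga_{2s}(x-Q)$ is bounded by the above, $B_R \setminus \Om^\ep$ has finite Lebesgue measure, and $\phi \in L^2(\pa\Om) \subset L^1(\pa\Om)$ (because $\pa\Om$ has finite surface measure), the integrand is dominated by $c\,\|F\|_\infty \ep^{-(n-2s)} |\phi(Q)|$, which is integrable in $(x,Q)$. Fubini's theorem then permits the exchange of the iterated integrals: the inner integral $\int_{\pa\Om}\Ga_{2s}(x-Q)\phi(Q)\,dQ$ reproduces $u(x)$ and gives the left-hand side, while integrating first in $x$ gives the right-hand side.

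I do not anticipate any real obstacle; the only point requiring care is the uniform bound on the kernel, which is available precisely because the cutoff region $\Om^\ep$ has been removed. The lemma is thus essentially a justification that, for the truncated domain $B_R \setminus \Om^\ep$, one may freely interchange the order of integration between the bulk and boundary variables when pairing $u = \mathcal{S}_s\phi$ against a smooth compactly supported test function.
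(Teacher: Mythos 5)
Your proof is correct and follows essentially the same route as the paper: both arguments observe that deleting $\Om^\ep$ yields the uniform kernel bound $|\Ga_{2s}(x-Q)|\leq c\,\ep^{-(n-2s)}$, verify absolute integrability of $F(x)\phi(Q)\Ga_{2s}(x-Q)$ on the product (the paper via H\"older, you via $L^2(\pa\Om)\subset L^1(\pa\Om)$ and boundedness of $F$ --- a cosmetic difference), and then invoke Fubini's theorem.
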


\begin{proof}
Note that by H$\ddot{o}$lder inequality, we get
\begin{align*}
\int_{B_R \setminus \Om^\ep} \int_{\pa \Om}|F(x)| \Ga_{2s}(x -Q) |\phi(Q)| dQ dx
\leq c  R^{\frac{n}2}|\pa \Om|^\frac12  \ep^{-n +2s} \| F\|_{L^2(B_R \setminus \Om^\ep)  } \| \phi \|_{L^2(\pa \Om)  },
\end{align*}
where $| \cdot |$ means the Lebesgue measure.
Hence, by Fubini's theorem, we get
\begin{eqnarray*}
\int_{B_R \setminus \Om^\ep} F(x) u(x) dx = \int_{B_R \setminus \Om^\ep} F(x) \int_{\pa \Om} \Ga_{2s}(x-Q) \phi(Q) dQ dx\\
=\int_{\pa \Om} \phi(Q) \int_{B_R \setminus \Om^\ep}F(x) \Ga_{2s} (x- Q) dx  dQ.
\end{eqnarray*}

\end{proof}

\begin{theo}\label{local}
Let $\frac12 < s <1$ and $\phi \in H^{-s +\frac12}(\pa \Om)$.
Set $ u  = {\mathcal S}_s \phi $ defined in \eqref{single1}.  Then, for all $B_R$ satisfying $\Om \subset B_{\frac12 R}$,
\begin{align}\label{theo2}
\| u\|_{H^{s } (B_R)}
\leq c_R \|\phi\|_{H^{-s +\frac12}(\pa \Om)}.
\end{align}
Moreover, for all $F \in C^\infty_c(B(R) )$
\begin{align}\label{duality}
\int_{B(R) } F(x) u(x) dx = <\phi, \int_{B(R)  }F(x) \Ga_{2s} (x- \cdot)
dx>.
\end{align}

\end{theo}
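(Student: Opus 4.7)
The plan is to realize $u = \mathcal{S}_s\phi$ as $I^R_{2s}$ applied to an appropriate pushforward of $\phi$ to $B_R$ and then read off the $H^s$-estimate directly from Lemma \ref{lemma2}. Choose $R$ large enough that $\overline\Om \subset B_{R/2}$. By Remark \ref{rem} applied with $U = B_R$ and exponent $s - \frac12 \in (0,1)$, the trace map $T\colon H^s(B_R)\to H^{s - \frac12}(\pa\Om)$, $T\psi = \psi|_{\pa\Om}$, is bounded. Its Banach-space adjoint sends $H^{-s + \frac12}(\pa\Om)$ into $H^{-s}_0(B_R) = (H^s(B_R))^*$; set $\tilde\phi := T^*\phi$, so that
$$\langle \tilde\phi,\psi\rangle_{H^{-s}_0(B_R),\,H^s(B_R)} = \langle \phi, \psi|_{\pa\Om}\rangle_{H^{-s + \frac12}(\pa\Om),\,H^{s - \frac12}(\pa\Om)}$$
for every $\psi \in H^s(B_R)$, and $\|\tilde\phi\|_{H^{-s}_0(B_R)} \leq c_R \|\phi\|_{H^{-s + \frac12}(\pa\Om)}$.

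Applying Lemma \ref{lemma2} with $\al = s$ yields $v := I^R_{2s}\tilde\phi \in H^s(B_R)$ with $\|v\|_{H^s(B_R)} \leq c_R\|\phi\|_{H^{-s + \frac12}(\pa\Om)}$. Fubini's theorem shows that $I^R_{2s}$ is symmetric on $C^\infty_c(B_R)$; the density argument used inside the proof of Lemma \ref{lemma2} extends this to show that, for every $F \in C^\infty_c(B_R)$,
$$\int_{B_R} F(x)\, v(x)\,dx = \langle \tilde\phi,\, I^R_{2s}F\rangle = \langle \phi,\, (I^R_{2s}F)|_{\pa\Om}\rangle.$$
By the symmetry of the Riesz kernel, $(I^R_{2s}F)(Q) = \int_{B_R} F(y)\Ga_{2s}(y - Q)\,dy$, so this is precisely the identity \eqref{duality} with $v$ in place of $u$.

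It remains to check that $v = u$ a.e.\ on $B_R$. If $\phi \in L^2(\pa\Om)$, I take $F \in C^\infty_c(B_R\setminus\pa\Om)$ and choose $\ep > 0$ with $\operatorname{supp} F \subset B_R \setminus \overline{\Om^\ep}$. Lemma \ref{lemm3} then produces exactly the same integral identity with $u$ in place of $v$; subtracting and letting $F$ range over $C^\infty_c(B_R\setminus\pa\Om)$ forces $u = v$ a.e.\ on $B_R\setminus\pa\Om$, hence a.e.\ on $B_R$ since $|\pa\Om|=0$. For general $\phi \in H^{-s + \frac12}(\pa\Om)$, I approximate by $\phi_n \in C^\infty(\pa\Om)\subset L^2(\pa\Om)$ converging in $H^{-s + \frac12}(\pa\Om)$. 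Then $\tilde\phi_n \to \tilde\phi$ in $H^{-s}_0(B_R)$, so $v_n\to v$ in $H^s(B_R)$ by Lemma \ref{lemma2}; and for each fixed $x \in B_R\setminus\pa\Om$ the kernel $\Ga_{2s}(x-\cdot)|_{\pa\Om}$ is smooth, so $u_n(x) = \langle \phi_n, \Ga_{2s}(x-\cdot)\rangle \to u(x)$ pointwise. Combining a subsequence that converges a.e.\ with the pointwise limit identifies $u = v$ a.e., which is \eqref{theo2}, and \eqref{duality} follows by passing to the limit in the identity already established for $v$.

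The main technical point is reconciling the pointwise definition \eqref{single1} of $u$ on $\R\setminus\pa\Om$ with the Hilbert-space element $I^R_{2s}\tilde\phi$ produced by an abstract duality functor. Once the trace-duality of Step~1 places $\phi$ inside $H^{-s}_0(B_R)$, Lemma \ref{lemma2} supplies the estimate essentially for free, and the remaining work is the Fubini/density identification in Step~3; this is also where one must be slightly careful about the integrability of $u$ near $\pa\Om$, which is sidestepped by testing only against $F$ supported away from $\pa\Om$.
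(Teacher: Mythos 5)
Your proof is correct, and while it draws on exactly the same lemmas (Lemma \ref{lemma2}, Proposition \ref{trace}/Remark \ref{rem}, and Lemma \ref{lemm3}), the architecture is genuinely different from the paper's. The paper works ``forward'': starting from $\phi\in L^2(\pa\Om)$, it passes to the limit $\ep\to0$ in the identity of Lemma \ref{lemm3} (using Lemma \ref{lemma2} with $\al=0$ and the trace to control the convergence of $\int_{B_R\setminus\Om^\ep}F\,\Ga_{2s}(\cdot-Q)dx$ in $L^2(\pa\Om)$), obtains the duality identity \eqref{duality} for all $F\in C^\infty_c(B_R)$, and only then deduces the quantitative bound \eqref{result1} and hence \eqref{theo2} by density and reflexivity. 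You instead go ``backward'': you push $\phi$ into $H^{-s}_0(B_R)$ via the adjoint $T^*$ of the trace, feed it to Lemma \ref{lemma2} with $\al=s$ to manufacture a candidate $v\in H^s(B_R)$ with the right norm bound and the right duality relation by construction, and then spend your effort on the identification $v=u$, which you do by testing against $F$ supported away from $\pa\Om$ (so that Lemma \ref{lemm3} applies verbatim without any $\ep$-limit) and approximating a general $\phi$ by $L^2$ densities.

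What the paper's route buys is that the integral identity \eqref{duality} is established for arbitrary $F\in C^\infty_c(B_R)$ directly and the $\ep\to0$ argument is localized to a single lemma, so the estimate reads off cleanly from the identity. What your route buys is that the estimate \eqref{theo2} is essentially automatic once $T^*$ and Lemma \ref{lemma2} (with $\al=s$) are in place; the cost is the a.e.\ identification $v=u$, where one must be a little careful that $u\in L^1_{\mathrm{loc}}(B_R)$ for $\phi\in L^2(\pa\Om)$ (this holds since $\Ga_{2s}$ is locally integrable, so that testing against $F$ supported in $B_R\setminus\pa\Om$ and using $|\pa\Om|=0$ really does identify the two), and one minor imprecision in your write-up is the invocation of $C^\infty(\pa\Om)$ on a Lipschitz boundary --- it suffices and is cleaner to say $L^2(\pa\Om)$ is dense in $H^{-s+\frac12}(\pa\Om)$, which is what the paper uses. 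Both proofs are sound; yours is a valid and somewhat more structural rearrangement of the same ideas.
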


 \begin{proof}
 Assume $\phi \in L^2(\pa \Om)$. Let $F \in C^\infty_c(B_R)$.
From   lemma \ref{lemma2}, we get
\begin{align*}
&\| \int_{B_R \setminus \Om_\ep} F(x) \Ga_{2s} (x -\cdot)dx - \int_{B_R } F(x) \Ga_{2s}(x -\cdot)dx \|_{H^{2s}(B_R)} \\
&= \| \int_{  \Om^\ep}F(x) \Ga_{2s} (x- \cdot) dx\|_{H^{2s}(B_R)} \leq c_R \|    F \|_{L^2 (\Om^\ep)} \ri 0
\quad \mbox{as} \quad \ep \ri 0,
\end{align*}
where  $\Om^\ep$ is defined in lemma \ref{lemm3} and $c_R$ is independent of $\ep$.  Applying remark \ref{rem},  this implies
\begin{align*}
\int_{B_R \setminus \Om_\ep} F(x) \Ga_{2s}(x -\cdot)dx \ri \int_{B_R } F(x) \Ga_{2s}(x -\cdot)dx
  \quad \mbox{in} \quad H^{2s -\frac12}(\pa \Om).
\end{align*}
In particular, $\int_{B_R \setminus \Om_\ep} F(x) \Ga_{2s}(x -\cdot)dx \ri \int_{B_R } F(x) \Ga_{2s}(x -\cdot)dx$ in $L^2(\pa \Om)$.
Hence, sending $\ep$ to the zero in \eqref{l2}, we obtain
\begin{align}\label{L2}
\int_{B_R} F(x) u(x) dx = \int_{\pa \Om} \phi(Q) \int_{B_R }F(x) \Ga_{2s} (x- Q) dx  dQ .
\end{align}
Since $L^2(\pa \Om)$ is dense subspace of $H^{-s +\frac12}(\pa \Om)$  and
$ \int_{B_R }F(x) \Ga_{2s} (x- \cdot) dx   \in H^{s -\frac12}(\pa \Om)$ by lemma \ref{lemma2}
and proposition  \ref{trace}, we obtain  \eqref{duality} from \eqref{L2}.
We again apply  proposition  \ref{trace},  lemma \ref{lemma2} and \eqref{duality}  to obtain the following
\begin{align}\label{result1}
|\int_{B_R } F(x) u(x) dx| & \leq c_R \|\phi
\|_{H^{-s+\frac12}(\pa \Om)} \|\int_{B_R }   F(x) \Ga_{2s}(x- \cdot)dx\|_{H^{s-\frac12} (\pa \Om)} \nonumber\\
 &\leq c_R \|\phi
\|_{H^{-s+\frac12}(\pa \Om)} \|\int_{B_R }   F(x) \Ga_{2s}(x- \cdot)dx\|_{H^{s} (B_R)}  \nonumber\\
& \leq c_R \|\phi
\|_{H^{-s+\frac12}(\pa \Om)}\| F\|_{H^{-s}_0({B_R} )}.
\end{align}
Since $C^\infty_c(B_R)$ is  dense in
$H^{-s}_0(B_R) = ( H^s(B_R))^* $ and $L^2(\pa \Om)$ is dense in $H^{-s+\frac12}(\pa \Om)$,
 \eqref{result1} holds for all $F \in H^{-s}_0(B_R)$ and $\phi \in H^{-s+\frac12}  (\pa \Om)$.
 Since  $H^s(B_R)$ is reflexive, we obtain \eqref{theo2}.
\end{proof}

By the proposition \ref{trace}, remark \ref{rem} and theorem \ref{local}, we have
the following theorem.
\begin{theo}\label{coro}
For $\frac12 < s <1$,
$$
S_s : H^{-s +\frac12}(\pa \Om) \ri H^{s -\frac12}(\pa \Om), \quad
S_s \phi = ({\mathcal S}_s \phi)|_{\pa \Om}
$$
is bounded operator, where $({\mathcal S}_s \phi )|_{\pa \Om}$ is
restriction of ${\mathcal S}_s \phi $ over $\pa \Om$.
\end{theo}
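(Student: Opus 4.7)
The plan is to assemble the statement from the three ingredients already assembled in the paper: Theorem~\ref{local}, Proposition~\ref{trace}, and Remark~\ref{rem}. Since the local interior estimate $\|\mathcal{S}_s\phi\|_{H^s(B_R)} \leq c_R \|\phi\|_{H^{-s+\frac12}(\partial\Omega)}$ is already in hand, the only remaining task is to pass from interior $H^s$ control on a ball to boundary $H^{s-\frac12}$ control on $\partial\Omega$.

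First I would apply Proposition~\ref{trace} with the parameter shifted by $\tfrac12$. Writing $s' = s - \tfrac12$, the hypothesis $\tfrac12 < s < 1$ places $s' \in (0, \tfrac12) \subset (0,1)$, so Proposition~\ref{trace} (with $s$ replaced by $s'$) yields a bounded trace operator $\mathcal{R} : H^{s}(\mathbf{R}^n) \to H^{s-\frac12}(\partial\Omega)$. Next I would invoke Remark~\ref{rem} to localize this bound: choose a bounded Lipschitz open $U$ with $\partial\Omega \subset U \subset B_R$, apply a Stein extension to any $F \in H^s(B_R)$, and conclude
\begin{equation*}
\|F|_{\partial\Omega}\|_{H^{s-\frac12}(\partial\Omega)} \leq c\,\|\tilde F\|_{H^s(\mathbf{R}^n)} \leq c\,\|F\|_{H^s(B_R)}.
\end{equation*}

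Finally I would chain the two inequalities. Fix any ball $B_R$ with $\Omega \subset B_{R/2}$, let $u = \mathcal{S}_s\phi$ with $\phi \in H^{-s+\frac12}(\partial\Omega)$, and combine the localized trace estimate with Theorem~\ref{local}:
\begin{equation*}
\|S_s\phi\|_{H^{s-\frac12}(\partial\Omega)} = \|u|_{\partial\Omega}\|_{H^{s-\frac12}(\partial\Omega)} \leq c\,\|u\|_{H^s(B_R)} \leq c\,c_R \|\phi\|_{H^{-s+\frac12}(\partial\Omega)},
\end{equation*}
which is exactly the boundedness claimed in Theorem~\ref{coro}. There is no substantive obstacle here: the conceptual content lives entirely in Theorem~\ref{local} (where the duality argument via Lemma~\ref{lemma2} is the real work). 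The only mild point to check is that Proposition~\ref{trace} is applicable at the exponent $s - \tfrac12$, which is immediate from the assumption $\tfrac12 < s < 1$.
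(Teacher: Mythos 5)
Your proposal is correct and follows exactly the same route the paper itself indicates ("By the proposition \ref{trace}, remark \ref{rem} and theorem \ref{local}, we have the following theorem"), merely filling in the elementary reparametrization $s' = s - \tfrac12$ and the chaining of the interior estimate with the localized trace bound. No discrepancy with the paper's intended argument.
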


\section{Properties of layer potential}\label{sec3-1}
\setcounter{equation}{0}

\begin{theo}\label{rem2}
Let  $ \frac12 < s < 1$ and $\phi \in H^{-s + \frac12}(\pa \Om)$ and
$u = {\mathcal S}_s \phi$ be a layer potential defined in
\eqref{single1}. Then $u \in \dot{H}^s (\R)$ and
\begin{align}\label{homo-norm}
\int_{\R}\int_{\R} \frac{|u(x) -   u( y)|^2}{| x-y|^{n+2s}} dydx
\leq c \| \phi \|^2_{H^{-s
+\frac12}(\pa \Om)}.
\end{align}
\end{theo}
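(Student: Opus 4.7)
The plan is to bound the Gagliardo seminorm
\[
\int_{\R}\int_{\R} \frac{|u(x) - u(y)|^2}{|x - y|^{n + 2s}}\, dy\, dx
\]
by $c \| \phi \|^2_{H^{-s + \frac12}(\pa \Om)}$, which by \eqref{homogeneous norm2} is equivalent to the stated $\dot H^s(\R)$ estimate. Choose $R$ with $\Om \subset B_{R/4}$ large enough that \eqref{behavior2} gives, on $B_{R/2}^c$,
\[
|u(z)| \le c|z|^{-(n-2s)}\|\phi\|_{H^{-s+\frac12}(\pa \Om)}, \qquad |\na u(z)| \le c|z|^{-(n-2s+1)}\|\phi\|_{H^{-s+\frac12}(\pa \Om)},
\]
and decompose the integral as $I_1 + 2I_2 + I_3$, with $I_1$ over $B_R \times B_R$, $I_3$ over $B_R^c \times B_R^c$, and $I_2$ over $B_R \times B_R^c$.

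For the local piece, $I_1 \le \|u\|^2_{H^s(B_R)}$, and Theorem \ref{local} gives the bound $c \| \phi \|^2_{H^{-s+\frac12}(\pa \Om)}$.

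For the far piece $I_3$, I subdivide by the dichotomy $|x-y| \le |x|/2$ versus $|x-y| > |x|/2$. In the first case, the segment $[x, y]$ lies in $\{|z| \ge |x|/2\} \subset B_{R/2}^c$, so the mean-value inequality together with the gradient bound yields $|u(x) - u(y)| \le c|x-y|\,|x|^{-(n-2s+1)}\|\phi\|_{H^{-s+\frac12}(\pa \Om)}$. The $y$-integration over $\{|x-y|\le|x|/2\}$ demands $s<1$ for integrability of $|x-y|^{2-n-2s}$ and produces a factor $c|x|^{2-2s}$; the residual $x$-integration $\int_R^\infty r^{-n+2s-1}\,dr$ converges because $2s<n$. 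In the second case, $|u(x)-u(y)|^2 \le 2(|u(x)|^2 + |u(y)|^2)$ together with the pointwise decay and $\int_{|x-y|>|x|/2}|x-y|^{-n-2s}\,dy = c|x|^{-2s}$ reduces to the same convergent radial integral.

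The mixed piece $I_2$ is handled in two sub-regions. If $y \in B_{2R} \setminus B_R$, then $(x,y) \in B_{2R} \times B_{2R}$, so this sub-region is absorbed into the local piece after replacing $R$ by $2R$. If $y \in B_{2R}^c$, then $|x-y| \ge |y|/2$, and the second-regime argument applies verbatim once $|u(x)|$ is controlled by $\|u\|_{L^2(B_R)} \le c\|\phi\|_{H^{-s+\frac12}(\pa \Om)}$ via Theorem \ref{local}. The main obstacle is the partition bookkeeping: gradient decay must be matched to short distances and pointwise decay to long ones, and both hypotheses $s<1$ and $2s<n$ enter precisely at the borderline integrability estimates.
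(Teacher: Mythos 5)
Your decomposition and toolkit coincide exactly with the paper's: the $B_R\times B_R$ piece is controlled by Theorem \ref{local}, the mixed piece is split at the $|y|=2R$ annulus, and the far piece is split by the dichotomy $|x-y|\le|x|/2$ versus $|x-y|>|x|/2$, with the mean-value theorem plus gradient decay on the short side and pointwise decay on the long side. So the overall strategy is the same as the paper's.

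However, one step in the far $\times$ far region with $|x-y|>|x|/2$ does not go through as claimed. After bounding $|u(x)-u(y)|^2 \le 2\bigl(|u(x)|^2+|u(y)|^2\bigr)$, the $|u(x)|^2$ contribution does reduce to a convergent radial integral via $\int_{|x-y|>|x|/2}|x-y|^{-n-2s}\,dy=c|x|^{-2s}$ and $\int_{|x|\ge R}|x|^{-2n+2s}\,dx<\infty$. But the $|u(y)|^2$ contribution is not symmetric, because the constraint $|x-y|>|x|/2$ is anchored at $|x|$, not $|y|$. The natural estimate $|x-y|^{-n-2s}\le c|x|^{-n-2s}$ followed by $\int_{|x|\ge R}|x|^{-n-2s}\,dx=cR^{-2s}$ leaves $\int_{|y|\ge R}|u(y)|^2\,dy \le c\|\phi\|^2_{H^{-s+\frac12}(\pa\Om)}\int_{|y|\ge R}|y|^{-2n+4s}\,dy$, which converges only when $n>4s$; since $\tfrac12<s<1$ allows $4s$ up to $4$, this fails for $n=2$ (all admissible $s$) and for $n=3$ with $s\ge\tfrac34$. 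The paper's estimate \eqref{homo-divide3} fixes exactly this by further splitting the $y$-integration at $|y|=2|x|$, distinguishing the cases $2n-4s\gtrless n$, and accepting the resulting $R^{-n+2s}\ln R$ factor; your argument needs that additional splitting to close.
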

\begin{proof}
Let $B_R$ be an open ball whose center is origin and radius is $R$ such that $\Om \subset B_{\frac12 R}$.
We divide the left-hand side of \eqref{homo-norm} with three parts
\begin{align}\label{homo-divide}
\begin{array}{ll}\vspace{2mm}
 &A_1 = \int_{|x| \leq R} \int_{|y| \leq R}  \frac{|u(x ) -  u( y)|^2}{| x- y|^{n+2s}} dydx, \quad
 A_2 =  2\int_{|x| \leq R} \int_{|y| \geq  R} \frac{|u(x ) -  u( y)|^2}{| x- y|^{n+2s}} dydx, \\
& \hspace{35mm} A_3 = \int_{|x| \geq R} \int_{|y| \geq R} \frac{|u(x ) -  u( y)|^2}{| x- y|^{n+2s}} dydx.
\end{array}
\end{align}
By  the theorem \ref{local}, $A_1$  is dominated by $ \| \phi \|^2_{H^{-s
+\frac12}(\pa \Om)}$.  For $|x| \leq R$ and $|y| \geq 2R$, we get that $|x-y| \geq |y| -|x| \geq |y| -R\geq \frac12 |y|  $.
Note that  by \eqref{behavior2}, for $|y| \geq 2R$,  we have  that $|u(y)|^2 \leq c  |y|^{-2n + 4s}\| \phi \|^2_{H^{-s
+\frac12}(\pa \Om)}$.  Hence, by  the theorem \ref{local}, we have
\begin{align*}
A_2 & \leq   2\int_{|x| \leq R} \int_{ R \leq |y| \leq  2R} \frac{|u(x ) - u( y)|^2}{| x- y|^{n+2s}} dydx
+ 8\int_{|x| \leq R} \int_{|y| \geq 2R} \frac{|u(x ) |^2 +  | u( y)|^2}{| y|^{n+2s}} dydx \\
& \leq c_R \| u\|^2_{H^s(B(2R)) }   + c \| \phi \|^2_{H^{-s
+\frac12}(\pa \Om)}  \int_{|x| \leq R} \int_{|y| \geq 2R} \frac{  1}{|  y|^{3n-2s}} dydx\\
& \leq c_R \| \phi\|^2_{H^{-s + \frac12}(\pa \Om) }.
\end{align*}

We divide $A_3$ with two parts;
\begin{align}\label{homo-divide2}
\int_{|x| \geq R} \int_{|y| \geq R, |x-y| \leq \frac12 |x|}  \frac{|u(x ) -  u( y)|^2}{| x- y|^{n+2s}} dydx
+ \int_{|x| \geq R} \int_{|y| \geq R, |x-y| \geq \frac12 |x|}  \frac{|u(x ) -  u( y)|^2}{| x- y|^{n+2s}} dydx.
\end{align}
For $|x| \geq R, \,\, |x-y| \leq \frac12 |x|$, applying mean-value theorem,  there is a $\eta $ between $x$ and $y$
such that $u(x) - u(y) = \na u(\eta) \cdot (x-y)$. Note that $|x-\eta| \leq \frac12 |x|$  and hence $|\eta| \geq \frac12 |x| \geq \frac12 R$.
Hence, by \eqref{behavior2},  the first term of \eqref{homo-divide2} is dominated by
\begin{align*}
  &\int_{|x| \geq R} \int_{ |x-y| \leq \frac12 |x|}  \frac{|\na u(\eta )|^2 }{| x- y|^{n+2s-2}} dydx \\
& \leq  c\|\phi \|^2_{H^{-s +\frac12}(\pa \Om)} \int_{|x| \geq R} \frac{1}{|x|^{2n -4s +2}}
\int_{ |x-y| \leq \frac12 |x|}  \frac{1}{| x- y|^{n+2s-2}} dydx \\
& \leq c \|\phi \|^2_{H^{-s +\frac12}(\pa \Om)} \int_{|x| \geq R} \frac{1}{|x|^{2n -2s}} dx\\
& = cR^{-n+2s}  \|\phi \|^2_{H^{-s +\frac12}(\pa \Om)}.
\end{align*}
Since $|x|, \,\, |y| \geq R$, by \eqref{behavior2}, the second term
of \eqref{homo-divide2} is dominated by
\begin{align}\label{homo-divide3}
\begin{array}{ll}\vspace{2mm}
&  \int_{|x| \geq R} \int_{|y| \geq R, |x-y| \geq \frac12 |x|}  \frac{|u(x )|^2 + | u( y)|^2}{| x- y|^{n+2s}} dydx\\ \vspace{2mm}
& \leq  \|\phi \|^2_{H^{-s +\frac12}(\pa \Om)} \int_{|x| \geq R}   \frac{1}{|x|^{2n -4s}}
\int_{|y| \geq R, |x-y| \geq \frac12 |x|} \frac{1}{|x-y|^{n+ 2s}} dydx\\
 & +  \|\phi \|^2_{H^{-s +\frac12}(\pa \Om)}
\int_{|x| \geq R}  \int_{|y| \geq R, |x-y| \geq \frac12 |x|}    \frac{1}{|x-y|^{n+ 2s}} \frac{1}{|y|^{2n -4s}}dydx.
\end{array}
\end{align}
The first term  of right-hand side of \eqref{homo-divide3} is dominated by $R^{-n + 2s} \|\phi \|^2_{H^{-s +\frac12}(\pa \Om)}$.
Note that
\begin{align*}
\int_{|x| \geq R}  \int_{ R \leq |y| \leq 2 |x|}    \frac{1}{|x|^{n+ 2s}} \frac{1}{|y|^{2n -4s}}dydx
& \leq  c  \left\{ \begin{array}{l}
 R^{-n +4s} \int_{|x| \geq R}    \frac{1}{|x|^{n+ 2s}} dx, \quad 2n -4s > n,\\
\int_{|x| \geq R}    \frac{\ln |x|}{|x|^{n+ 2s}}  dx, \quad 2n -4s =n,\\
\int_{|x| \geq R}    \frac{1}{|x|^{2n- 2s}} dx, \quad 2n -4s <n
\end{array}
\right.\\
& \leq c R^{-n + 2s}  \ln R.
\end{align*}
Then, the second  term  of right-hand side of \eqref{homo-divide3} is dominated by
\begin{align*}
&   \|\phi \|^2_{H^{-s +\frac12}(\pa \Om)} \int_{|x| \geq R}  \int_{|y| \geq R, |x-y| \geq \frac12 |x|}    \frac{1}{|x-y|^{n+ 2s}} \frac{1}{|y|^{2n -4s}}dydx \\
&  \leq c  \|\phi \|^2_{H^{-s +\frac12}(\pa \Om)}
           \Big( \int_{|x| \geq R}  \int_{ R \leq |y| \leq  2|x|}    \frac{1}{|x|^{n+ 2s}} \frac{1}{|y|^{2n -4s}}dydx
+ \int_{|x| \geq R}  \int_{|y| \geq 2 |x| }     \frac{1}{|y|^{3n -2s}}dydx \Big) \\
&  \leq c R^{-n + 2s}  \ln R  \|\phi \|^2_{H^{-s +\frac12}(\pa \Om)}.
\end{align*}
Therefore, we  showed  that $ A_1 + A_2 + A_3  \leq c_R \|\phi \|^2_{H^{-s +\frac12}(\pa \Om)}$ and hence showed  \eqref{homo-norm}.
\end{proof}

\begin{theo}
Let  $ \frac12 < s < 1$ and $\phi \in H^{-s + \frac12}(\pa \Om)$ and
$u = {\mathcal S}_s \phi$ be a layer potential defined in
\eqref{single1}. Then,
\begin{align}\label{Fourier}
\hat{u}(\xi) = |\xi|^{-2s} < \phi,  e^{2\pi i \xi \cdot  }>.
\end{align}
\end{theo}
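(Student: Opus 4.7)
The plan is to establish \eqref{Fourier} first for $\phi\in L^2(\pa\Om)$ by the convolution theorem, then extend to all of $H^{-s+\frac12}(\pa\Om)$ by continuity. For $\phi\in L^2(\pa\Om)$, regard $\Phi:=\phi\,d\sigma_{\pa\Om}$ as a compactly supported finite Borel measure on $\R$, hence a compactly supported tempered distribution. By Fubini, $u=\mathcal{S}_s\phi$ coincides with the convolution $\Ga_{2s}\ast\Phi$. Compactness of $\operatorname{supp}\Phi$ makes $\hat\Phi$ a bounded $C^\infty$ function equal to
\begin{equation*}
\hat\Phi(\xi)=\int_{\pa\Om}\phi(Q)e^{-2\pi i\xi\cdot Q}\,dQ=\langle\phi,e^{2\pi i\xi\cdot}\rangle
\end{equation*}
in the sesquilinear pairing convention used in the paper (compare Remark 2.1). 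Proposition \ref{prop2}(2) identifies $\widehat{\Ga_{2s}}$ with $|\xi|^{-2s}$ as tempered distributions, a locally integrable function since $2s<n$, so the convolution theorem yields $\hat u(\xi)=|\xi|^{-2s}\hat\Phi(\xi)$, which is \eqref{Fourier} in this case.

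For general $\phi\in H^{-s+\frac12}(\pa\Om)$, pick $\phi_k\in L^2(\pa\Om)$ with $\phi_k\to\phi$ in $H^{-s+\frac12}(\pa\Om)$ and set $u_k:=\mathcal{S}_s\phi_k$. Theorem \ref{rem2} gives $u_k\to u$ in $\dot H^s(\R)$, hence $|\xi|^s\hat u_k\to|\xi|^s\hat u$ in $L^2(\R)$. Testing the already known identity $\hat u_k(\xi)=|\xi|^{-2s}\langle\phi_k,e^{2\pi i\xi\cdot}\rangle$ against an arbitrary $\eta\in C^\infty_c(\R\setminus\{0\})$, the left side converges to $\int\hat u\,\bar\eta\,d\xi$ because $|\xi|^{-s}\eta\in L^2$ on $\operatorname{supp}\eta$. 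For the right side, the elementary bound $\|e^{2\pi i\xi\cdot}\|_{H^{s-\frac12}(\pa\Om)}\le c(1+|\xi|)^{s-\frac12}$ (obtained by splitting the double integral defining the Sobolev norm at $|P-Q|=1/|\xi|$) combined with duality
\begin{equation*}
|\langle\phi_k,e^{2\pi i\xi\cdot}\rangle|\le\|\phi_k\|_{H^{-s+\frac12}(\pa\Om)}\|e^{2\pi i\xi\cdot}\|_{H^{s-\frac12}(\pa\Om)}
\end{equation*}
furnishes a uniform integrable majorant on $\operatorname{supp}\eta$, so dominated convergence yields the right side with $\phi$ replacing $\phi_k$. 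Since $\eta$ was arbitrary in $C^\infty_c(\R\setminus\{0\})$, the identity \eqref{Fourier} holds a.e.

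The main subtlety I expect is handling the low-frequency singularity of $|\xi|^{-2s}$: the $\dot H^s$ topology controls $\hat u$ strongly at high frequencies and weakly near $\xi=0$, precisely where $|\xi|^{-2s}$ is most singular. Restricting test functions to be supported away from the origin decouples these two effects and pins down $\hat u$ as a locally integrable function on $\R\setminus\{0\}$; the identity then automatically upgrades to one on all of $\R$ since $\{0\}$ has measure zero. A secondary care-point is the sign convention in $\langle\phi,e^{2\pi i\xi\cdot}\rangle$, which is unambiguous only once the sesquilinearity of the $H^{-\al}$--$H^{\al}$ pairing has been fixed as in Remark 2.1.
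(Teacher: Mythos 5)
Your handling of the $L^{2}$ case is essentially the paper's argument in different clothing: both compute $\hat u$ for $\phi\in L^{2}(\pa\Om)$ by regarding $u$ as the convolution of $\Ga_{2s}$ with the compactly supported measure $\phi\,d\sigma$, using the Fubini identity \eqref{L2} and Proposition \ref{prop2}(2) (i.e.\ $\widehat{\Ga_{2s}}=|\xi|^{-2s}$). Where you genuinely improve on the paper is the passage to general $\phi\in H^{-s+\frac12}(\pa\Om)$. The paper ends with the one-line remark that $L^{2}(\pa\Om)$ is dense in $H^{-s+\frac12}(\pa\Om)$, but never specifies in which topology the two sides of \eqref{Fourier} pass to the limit; this is not automatic, since for the limit $u$ one only knows $|\xi|^{s}\hat u\in L^{2}$, and the right-hand side carries the nonintegrable singularity $|\xi|^{-2s}$ at the origin. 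Your argument closes this gap correctly: Theorem \ref{rem2} and linearity give $u_{k}\to u$ in $\dot H^{s}(\R)$, so $|\xi|^{s}\hat u_{k}\to|\xi|^{s}\hat u$ in $L^{2}$; testing against $\eta\in C^{\infty}_{c}(\R\setminus\{0\})$ converts the left side to an $L^{2}$ pairing (since $|\xi|^{-s}\eta\in L^{2}$ on $\operatorname{supp}\eta$), while the pointwise bound $\|e^{2\pi i\xi\cdot}\|_{H^{s-\frac12}(\pa\Om)}\leq c(1+|\xi|)^{s-\frac12}$ — obtained, as you indicate, by splitting the Gagliardo double integral at $|P-Q|=|\xi|^{-1}$ and using $s>\frac12$ for convergence of both pieces — supplies a dominating function on $\operatorname{supp}\eta$ for the right side. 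Since $\{0\}$ is a null set, the a.e.\ identity on $\R\setminus\{0\}$ is the a.e.\ identity on $\R$. This is a correct and noticeably more careful version of the density step than the paper offers. One small caveat to keep straight, which you already flag: with the sesquilinear pairing of Remark 2.1, $\langle\phi,e^{2\pi i\xi\cdot}\rangle=\int_{\pa\Om}\phi(Q)e^{-2\pi i\xi\cdot Q}\,dQ$, so whether \eqref{Fourier} matches the displayed line in the paper's computation (which carries $e^{+2\pi i\xi\cdot Q}$) depends on how conjugation is threaded through Proposition \ref{prop2}(2); the substance of the argument is unaffected.
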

\begin{proof}
Let  $\phi \in L^2(\pa \Om)$ and $\psi \in C^\infty_c (\R)$. By  \eqref{L2} and 2)
of proposition \ref{prop2}, we have
\begin{align*}
\int_{\R} u (x) \psi(x) dx  &= c(n,s)\int_{\pa \Om} \phi (Q)
\int_{\R}
\frac{1}{|x-Q|^{n-2s}} \psi(x) dx dQ\\
 & =   \int_{\pa \Om} \phi (Q) \int_{\R} |\xi|^{-2s} e^{2\pi i \xi \cdot Q}
 \overline{\hat{\psi}(\xi)}
 d\xi  dQ\\
 & =  \int_{\R} \overline{\hat{\psi}(\xi)} |\xi|^{-2s} \int_{\pa \Om} \phi(Q) e^{2\pi
 i \xi \cdot Q} dQd\xi.
\end{align*}
Hence, we get
\begin{align*}
\hat{u}(\xi) = |\xi|^{-2s} \int_{\pa \Om} \phi(Q)  e^{2\pi i
\xi \cdot Q } dQ.
\end{align*}
Since $L^2(\pa \Om)$ is dense in $H^{-s +\frac12}(\pa \Om)$, we get \eqref{Fourier} for all  $\phi \in H^{-s + \frac12}(\pa \Om)$.
\end{proof}

\begin{theo}
Let  $ \frac12 < s < 1$ and $\phi \in H^{-s + \frac12}(\pa \Om)$ and
$u = {\mathcal S}_s \phi$ be a layer potential defined in
\eqref{single1}. Then,
\begin{align}\label{solution}
\De^s u =0, \quad \mbox{in} \quad \R \setminus \pa \Om.
\end{align}

\end{theo}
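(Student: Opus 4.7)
The goal is to show that, for every test function $\psi \in C^\infty_c(\mathbb{R}^n \setminus \partial \Omega)$, the distributional pairing $\langle \De^s u, \psi\rangle$ vanishes. Since Theorem \ref{rem2} gives $u \in \dot{H}^s(\R)$, Remark 2.1(2) tells us that $\De^s u$ lies in $\dot{H}^{s-2s}(\R) = \dot H^{-s}(\R)$ and that it acts on $\dot H^s$–functions by
\begin{align*}
\langle \De^s u, \psi\rangle = \int_{\R} (2\pi i |\xi|)^{2s}\, \hat u(\xi)\, \overline{\hat \psi(\xi)}\, d\xi.
\end{align*}
Every $\psi \in C^\infty_c(\R)$ lies in $\dot H^s(\R)$, so this pairing is well defined.

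Next I would substitute the Fourier representation of $u$ obtained in \eqref{Fourier}, namely $\hat u(\xi) = |\xi|^{-2s}\langle \phi, e^{2\pi i \xi \cdot}\rangle$. The factor $|\xi|^{-2s}$ cancels the $|\xi|^{2s}$ in $(2\pi i |\xi|)^{2s}$, leaving
\begin{align*}
\langle \De^s u, \psi\rangle = (2\pi i)^{2s} \int_{\R} \langle \phi, e^{2\pi i \xi \cdot}\rangle\, \overline{\hat \psi(\xi)}\, d\xi.
\end{align*}
The key manipulation is then to interchange the $\xi$–integral with the duality pairing on $\partial \Om$, producing
\begin{align*}
\langle \De^s u, \psi\rangle = (2\pi i)^{2s} \Big\langle \phi,\ \int_{\R} e^{2\pi i \xi \cdot Q}\, \overline{\hat \psi(\xi)}\, d\xi \Big\rangle = (2\pi i)^{2s} \langle \phi, \overline{\psi}|_{\pa \Om}\rangle,
\end{align*}
by the Fourier inversion formula applied to the Schwartz function $\psi$. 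Because $\mathrm{supp}(\psi)$ is a compact subset of $\R \setminus \pa \Om$, we have $\psi|_{\pa \Om} \equiv 0$, and the right hand side vanishes.

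The main obstacle is to justify rigorously the Fubini-type interchange of the $H^{-s+1/2}(\pa \Om)$ pairing with the $L^2(\R;d\xi)$ integral. I would first verify the identity for $\phi \in L^2(\pa \Om)$, where Fubini applies directly because $(\xi, Q) \mapsto e^{2\pi i \xi \cdot Q}\overline{\hat\psi(\xi)}\phi(Q)$ is integrable (using rapid decay of $\hat\psi$ and boundedness of $\pa \Om$), and then extend to $\phi \in H^{-s+1/2}(\pa \Om)$ by the density of $L^2(\pa \Om)$ in $H^{-s+1/2}(\pa \Om)$ together with the continuous dependence of both sides on $\phi$: the right-hand side because $\psi|_{\pa \Om} = 0$ (so it is identically zero on the dense subspace), and the left-hand side because $\De^s u$ depends continuously on $\phi$ in $\dot H^{-s}(\R)$ via Theorem \ref{rem2}. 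This density argument shows $\langle \De^s u, \psi\rangle = 0$ for all such $\psi$, proving \eqref{solution}.
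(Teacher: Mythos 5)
Your proof follows essentially the same route as the paper: express $\langle \De^s u,\psi\rangle$ via the Fourier symbol, substitute $\hat u(\xi)=|\xi|^{-2s}\langle\phi,e^{2\pi i\xi\cdot}\rangle$ so the weights cancel, interchange the $\xi$-integral with the pairing (Fubini, first for $\phi\in L^2(\pa\Om)$), apply Fourier inversion to reduce to $\langle\phi,\psi|_{\pa\Om}\rangle$, observe this vanishes because $\operatorname{supp}\psi$ avoids $\pa\Om$, and finish by density of $L^2(\pa\Om)$ in $H^{-s+\frac12}(\pa\Om)$. You are somewhat more explicit about why $\De^s u$ is well-defined as an element of $\dot H^{-s}(\R)$ and about the continuity needed for the density step, but the argument is the same.
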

\begin{proof}
Suppose that $\phi \in L^2 (\pa \Om)$ and $\psi \in C^\infty_c
(\R \setminus \pa \Om)$, then
\begin{align*}
<\De^s u, \psi>  & = -\int_{\R}
 |\xi|^{2s} \hat {u}(\xi) \overline{\hat{\psi}(\xi)} d\xi \\
 &= - \int_{\R} \overline{\hat{\psi} (\xi)} 
     \int_{\pa \Om} e^{-2\pi i \xi \cdot Q}  \phi(Q) dQ
 d\xi\\
 & = -  \int_{\pa \Om} \phi (Q)  \overline{\int_{\R} e^{2\pi i \xi \cdot Q }
 \hat{\psi} (\xi) d\xi} dQ\\
 & = -\int_{\pa\Om} \phi(Q) \psi (Q) dQ\\
 & =0.
\end{align*}
Since $L^2 (\pa \Om)$ is dense subspace of $H^{-s + \frac12}(\pa
\Om)$, we get  \eqref{solution}for all $\phi \in H^{-s + \frac12 }(\pa \Om)$.
\end{proof}

\begin{theo}
Let  $ \frac12 < s < 1$ and $\phi \in H^{-s + \frac12}(\pa \Om)$ and
$u = {\mathcal S}_s \phi$ be a layer potential defined in
\eqref{single1}. Then,
\begin{align}\label{paring}
<\phi, u> =- \int_{\R} | \xi|^{2s} |\hat{u}|^2.
\end{align}

\end{theo}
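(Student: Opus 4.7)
The strategy is a Plancherel-type computation, established first for $\phi\in L^2(\pa\Om)$ by a direct Fubini calculation and then extended to all $\phi\in H^{-s+\frac12}(\pa\Om)$ by density.

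For $\phi\in L^2(\pa\Om)$, the pairing $\langle\phi,u\rangle$ reduces to the $L^2(\pa\Om)$ integral $\int_{\pa\Om}\phi(Q)u(Q)\,dQ$ (meaningful since $u|_{\pa\Om}\in H^{s-\frac12}(\pa\Om)$ by Theorem \ref{coro}). Substituting the representation $u(Q)=\int_{\pa\Om}\Ga_{2s}(Q-P)\phi(P)\,dP$ and applying Fubini rewrites this as $\iint_{\pa\Om\times\pa\Om}\phi(Q)\phi(P)\Ga_{2s}(Q-P)\,dP\,dQ$. On the Fourier side, \eqref{Fourier} gives $\hat u(\xi)=|\xi|^{-2s}M(\xi)$ with $M(\xi)=\int_{\pa\Om}\phi(Q)e^{2\pi i\xi\cdot Q}\,dQ$, whence
\[
\int_{\R}|\xi|^{2s}|\hat u(\xi)|^2\,d\xi=\int_{\R}|\xi|^{-2s}|M(\xi)|^2\,d\xi.
\]
Expanding $|M(\xi)|^2$ as a double $(Q,P)$-integral, swapping orders, and invoking item 2) of Proposition \ref{prop2} (which identifies $\Ga_{2s}$ as the distributional inverse Fourier transform of $|\xi|^{-2s}$), the inner $\xi$-integral collapses to $\Ga_{2s}(Q-P)$, reproducing the same double boundary integral. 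This establishes \eqref{paring} for $\phi\in L^2(\pa\Om)$, with the sign determined by the conventions for $\De^s$ and for the Fourier transform already used in \eqref{Fourier} and \eqref{solution}.

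For the extension, choose $\phi_n\in L^2(\pa\Om)$ with $\phi_n\to\phi$ in $H^{-s+\frac12}(\pa\Om)$. By Theorem \ref{rem2}, $u_n=\mathcal{S}_s\phi_n\to u$ in $\dot H^s(\R)$, so the right-hand side $\int_{\R}|\xi|^{2s}|\hat u_n|^2\,d\xi$ converges to $\int_{\R}|\xi|^{2s}|\hat u|^2\,d\xi$; and by Theorem \ref{coro}, the traces $u_n|_{\pa\Om}\to u|_{\pa\Om}$ in $H^{s-\frac12}(\pa\Om)$, so the duality pairings $\langle\phi_n,u_n\rangle\to\langle\phi,u\rangle$. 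Passing to the limit yields \eqref{paring} in full generality.

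The main technical obstacle is justifying the Fubini exchange in the $L^2$-step: $|\xi|^{-2s}$ is not integrable at infinity and $M(\xi)$ is merely bounded for $\phi\in L^2(\pa\Om)$, so the triple integral is not absolutely convergent in the classical sense. I would circumvent this by inserting a Gaussian regularizer $e^{-\ep|\xi|^2}$, applying Fubini in the regularized setting where absolute convergence is automatic, and then letting $\ep\to 0^+$: the $\xi$-integral is dominated by $\int|\xi|^{2s}|\hat u|^2\,d\xi<\infty$ (Theorem \ref{rem2}), while on the boundary side the Gauss-regularized Riesz kernel converges pointwise to $\Ga_{2s}(Q-P)$ and is controlled by $\Ga_{2s}$ itself, so dominated convergence applies on both sides. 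Alternatively, one may bypass Fubini altogether by interpreting the $\xi$-integral directly via item 2) of Proposition \ref{prop2}, tested against $\phi(Q)\phi(P)$ on $\pa\Om\times\pa\Om$.
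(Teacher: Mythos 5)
Your proposal takes a genuinely different route from the paper. The paper's proof makes a detour through the \emph{classical} single layer potential $\mathcal{S}_1\phi$: it invokes the jump relation $-\phi=\frac{\pa\mathcal{S}_1\phi}{\pa{\bf n}^+}+\frac{\pa\mathcal{S}_1\phi}{\pa{\bf n}^-}$, applies Green's identity on $\Om$ and on $\R\setminus\overline\Om$ to rewrite $-\int_{\pa\Om}u\phi$ as the volume integral $\int_{\R}\na u\cdot\na\mathcal{S}_1\phi$, passes to the Fourier side by Plancherel, and then exploits the algebraic cancellation $|\xi|^{2}|\xi|^{-2}=|\xi|^{2s}|\xi|^{-2s}$ together with $\hat u=|\xi|^{-2s}M(\xi)$ and $\widehat{\mathcal{S}_1\phi}=|\xi|^{-2}M(\xi)$. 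The key payoff of this detour is that $\int_{\R}\na u\cdot\na\mathcal{S}_1\phi$ is a \emph{bona fide} $L^2$ inner product (both factors are in $L^2(\R)$ by the $\dot H^1$-boundedness of the harmonic single layer), so Plancherel applies without any conditional-convergence issues. Your direct route via $\iint_{\pa\Om\times\pa\Om}\phi(Q)\phi(P)\Ga_{2s}(Q-P)\,dP\,dQ$ and $\int|\xi|^{-2s}|M(\xi)|^2\,d\xi$ is more transparent conceptually but runs head-on into the Fubini obstacle you identified, and also relies on the unexamined identification of the Sobolev trace $(\mathcal{S}_s\phi)|_{\pa\Om}$ with the pointwise boundary integral $\int_{\pa\Om}\Ga_{2s}(\cdot-P)\phi(P)\,dP$ for general $\phi\in L^2(\pa\Om)$, which is itself a small lemma.

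There is, however, one substantive issue you should not wave away as ``sign determined by conventions.'' Your computation, carried out honestly, produces
\[
\langle\phi,u\rangle \;=\; \iint_{\pa\Om\times\pa\Om}\phi(Q)\phi(P)\,\Ga_{2s}(Q-P)\,dP\,dQ \;=\; \int_{\R}|\xi|^{-2s}|M(\xi)|^2\,d\xi \;=\; +\int_{\R}|\xi|^{2s}|\hat u(\xi)|^2\,d\xi,
\]
a manifestly nonnegative quantity (the Riesz kernel is positive definite), whereas the stated identity \eqref{paring} has a minus sign on the right. Your method therefore does not reproduce the statement as written; it proves the same identity with the opposite sign. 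You should flag this rather than gesture at conventions: the paper's own usage is in fact inconsistent (in the proof of Lemma \ref{lemm} the author writes $\int|\xi|^{2s}|\hat u_k|^2=\langle\Phi_k,u_k\rangle$, with no minus sign), and the sign in the paper's proof traces back to the unconventional form of the jump relation used there. The downstream applications only use that $\langle\phi,\mathcal{S}_s\phi\rangle=0$ forces $u\equiv0$, which is sign-insensitive, so this is harmless to the rest of the paper; but a proof that silently delivers $+$ for a theorem asserting $-$ has not proved the theorem, and the discrepancy should be stated and resolved, not deferred.
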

\begin{proof}
Let $\phi \in L^2(\pa \Om)$. Then, it is well-known that the outer
and inner normal derivatives $\frac{\pa {\mathcal S}_1 \phi}{\pa
{\bf n}^+}, \,\, \frac{\pa {\mathcal S}_1 \phi}{\pa {\bf n}^-}$ of
${\mathcal S}_1 \phi$  are in $L^2 (\pa \Om)$ and  $-\phi =
\frac{\pa {\mathcal S}_1 \phi}{\pa {\bf n}^+} + \frac{\pa
{\mathcal S}_1 \phi}{\pa {\bf n}^-}$ (see \cite{V}). Hence, we have
\begin{align*}
-\int_{\pa \Om} u(Q) \phi(Q) dQ & = \int_{\pa \Om} u(Q) (\frac{\pa
{\mathcal S}_1 \phi}{\pa {\bf n}^+} + \frac{\pa {\mathcal S}_1
\phi}{\pa
{\bf n}^-})  \\
& = \int_{\R} \na u \cdot \na {\mathcal S}_1 \phi  \\
& =\int_{\R} |\xi|^2 |\xi|^{-2} \hat{u} \overline{\int_{\pa
\Om} e^{-2\pi i \xi
\cdot Q} \phi(Q) dQ} d\xi\\
& = \int_{\R} |\xi|^{2s} |\xi|^{-2s} \hat{u}
\overline{\int_{\pa \Om} e^{-2\pi i \xi \cdot Q} \phi(Q) dQ} d\xi  \\
& = \int_{\R} |\xi|^{2s} |\hat{u}|^2.
\end{align*}

Since $L^2(\pa \Om)$ is a dense subset of $H^{-s +\frac12} (\pa
\Om)$, we get \eqref{paring} for all $\phi \in H^{-s + \frac12} (\pa\Om)$.
\end{proof}

\section{Proof of  Theorem \ref{theo1}}\label{sec5}
\setcounter{equation}{0}

 \begin{lemm}\label{lemma1}

 Let $\frac{1}{2}<s<1$. Then
$S_s : H^{-s + \frac12} (\pa \Om) \ri H^{s -\frac12} (\pa \Om)$ is
one-to-one.
 \end{lemm}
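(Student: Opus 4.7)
The plan is to combine the three key identities for the single layer potential $u = \mathcal{S}_s\phi$ established in Section~\ref{sec3-1}: the Fourier representation \eqref{Fourier}, the vanishing of $\De^s u$ off $\pa\Om$ \eqref{solution}, and most importantly the energy identity \eqref{paring}. Suppose $\phi \in H^{-s+\frac12}(\pa\Om)$ satisfies $S_s\phi = 0$, and set $u = \mathcal{S}_s\phi$. The strategy in two stages is: the vanishing of the trace $u|_{\pa\Om}$, fed into the energy identity, forces $\|u\|_{\dot H^s(\R)} = 0$, which together with the decay at infinity forces $u \equiv 0$ on all of $\R$; then Fourier inversion of the formula for $\hat u$ will force $\phi = 0$.

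To carry out the first stage, I would invoke \eqref{paring}: the duality pairing $\langle \phi, S_s\phi\rangle$ between $H^{-s+\frac12}(\pa\Om)$ and $H^{s-\frac12}(\pa\Om)$ equals $-\int_{\R}|\xi|^{2s}|\hat u(\xi)|^2\,d\xi$. Since $S_s\phi = 0$ by assumption, the left-hand side is zero, so $|\xi|^s \hat u(\xi) = 0$ almost everywhere. Hence, as a tempered distribution, $\hat u$ is supported at the origin, so $u$ is a polynomial in $\R$. However, the pointwise decay estimate \eqref{behavior2} (applied with $\al = s-\frac12$ and $|\be|=0$) gives $|u(x)| \le c|x|^{-(n-2s)} \to 0$ as $|x|\to\infty$, so the only polynomial compatible with this decay is identically zero. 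Thus $u \equiv 0$ on $\R$.

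For the second stage I would apply the Fourier formula \eqref{Fourier}, which gives $|\xi|^{-2s}\langle\phi, e^{2\pi i\xi\cdot\,\cdot\,}\rangle = \hat u(\xi) = 0$ for all $\xi \ne 0$. Hence $\langle\phi, e^{2\pi i\xi\cdot Q}\rangle = 0$ for every $\xi \ne 0$, and by continuity of $\xi\mapsto e^{2\pi i\xi\cdot Q}|_{\pa\Om}$ into $H^{s-\frac12}(\pa\Om)$ the pairing also vanishes at $\xi = 0$. Viewing $\phi$ as the compactly supported distribution $\phi\otimes dS_{\pa\Om}$ on $\R$, defined on test functions $\psi \in C_c^\infty(\R)$ by $\langle \phi\otimes dS_{\pa\Om},\psi\rangle := \langle\phi, \psi|_{\pa\Om}\rangle$ (this is well defined by Proposition~\ref{trace} together with Remark~\ref{rem}), the preceding identity says exactly that the Fourier transform of $\phi\otimes dS_{\pa\Om}$ vanishes identically, so $\phi\otimes dS_{\pa\Om} = 0$ as a distribution on $\R$.

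The main technical point I expect to worry about is the passage from $\phi\otimes dS_{\pa\Om}=0$ back to $\phi=0$ in $H^{-s+\frac12}(\pa\Om)$. This requires the density of the set of traces $\{\psi|_{\pa\Om} : \psi \in C_c^\infty(\R)\}$ in $H^{s-\frac12}(\pa\Om)$, together with the uniform trace bound of Proposition~\ref{trace}. Once these are in hand, the vanishing $\langle\phi,\psi|_{\pa\Om}\rangle = 0$ for every $\psi\in C_c^\infty(\R)$ extends by density to arbitrary test elements of $H^{s-\frac12}(\pa\Om)$, yielding $\phi=0$ by the definition of the dual norm on $H^{-s+\frac12}(\pa\Om)$.
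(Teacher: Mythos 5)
Your proof is correct, and it reproduces the paper's first stage verbatim (feed $S_s\phi=0$ into the energy identity \eqref{paring}, conclude $\hat u$ vanishes off the origin, hence $u$ is a polynomial, and then kill it with the decay \eqref{behavior2}), but your second stage is genuinely different from the paper's. After $u\equiv 0$, the paper applies the Riesz potential $I_{2-2s}$ and uses the composition formula (Proposition~\ref{prop2}, part~3) to show ${\mathcal S}_1\phi\equiv 0$, i.e.\ $S_1\phi=0$; it then invokes the known bijectivity of the classical single layer $S_1:H^{-\al}(\pa\Om)\ri H^{-\al+1}(\pa\Om)$ for $0\le\al\le 1$ (Fabes--Mendez--Mitrea, Jerison--Kenig) to conclude $\phi=0$. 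You instead go through the Fourier identity \eqref{Fourier}: since $\hat u\equiv 0$ you deduce $\langle\phi,e^{2\pi i\xi\cdot}\rangle=0$ for all $\xi$, interpret $\phi$ as the compactly supported distribution $\phi\otimes dS_{\pa\Om}$ on $\R$, conclude its Fourier transform (an entire function) vanishes and hence the distribution is zero, and finally recover $\phi=0$ from density of $\{\psi|_{\pa\Om}:\psi\in C_c^\infty(\R)\}$ in $H^{s-\frac12}(\pa\Om)$. The paper's route trades on the deep invertibility theory for the Laplace single layer on Lipschitz domains; yours trades on the surjectivity/density half of the Jonsson--Wallin trace--extension theorem, which the paper already cites for Proposition~\ref{trace}. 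Your version is arguably more self-contained in that it does not import injectivity of $S_1$, at the cost of having to argue explicitly (as you correctly flag) that the traces of $C_c^\infty(\R)$ are dense in $H^{s-\frac12}(\pa\Om)$ -- a fact that does hold for Lipschitz boundaries but does need to be said. Either route is a valid completion of the proof.
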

\begin{proof}
Suppose that $S_s \phi =0$ for some $\phi \in H^{-s +\frac12} (\pa
\Om)$. Then, by \eqref{paring} and decay of $u$ at infinity, we get $u \equiv 0$ in $\R$. By 3)
of Proposition \ref{prop2}, we have
\begin{align*}
0=I_{2 -2s} u(x)& = c_n\int_{\R} \frac{1}{|x -y|^{n -(2-2s)}}
< \phi , \Ga_{2s}(y - \cdot)> dy\\
 &= < \phi,  \int_{\R}  \frac{1}{|x -y|^{n-(2-2s)}}
\frac{1}{| y - \cdot|^{n-2s}} dy \\
 & =< \phi,\Ga_1(x -\cdot)>.
\end{align*}
Hence, $S_1 \phi =0$. Note that $S_1 : H^{-\al} (\pa \Om) \ri H^{-\al
+ 1} (\pa \Om)$ is bijective  for $0\leq \al \leq 1$ (see \cite{FMM} and
\cite{JK}). Since $ 0 < s -\frac12  < \frac12 $,  we get $\phi =0$ and so
$S_s : H^{-s + \frac12} (\pa \Om) \ri H^{s -\frac12} (\pa \Om)$ is
one-to-one.
\end{proof}

\begin{lemm}\label{lemm}
Let $\frac{1}{2}<s<1$. Then
$S_s : H^{-s + \frac12} (\pa \Om) \ri H^{s -\frac12} (\pa \Om)$
has a closed range.
\end{lemm}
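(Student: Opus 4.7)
The plan is to prove the coercivity estimate
$$\|\phi\|_{H^{-s+\frac12}(\pa\Om)} \leq c\,\|S_s\phi\|_{H^{s-\frac12}(\pa\Om)}, \qquad \phi \in H^{-s+\frac12}(\pa\Om),$$
which, combined with the injectivity established in Lemma~\ref{lemma1}, yields closed range by the usual limit argument: if $S_s\phi_n \to g$ in $H^{s-\frac12}(\pa\Om)$, the coercivity makes $\{\phi_n\}$ Cauchy in $H^{-s+\frac12}(\pa\Om)$, so $\phi_n \to \phi$, and the boundedness in Theorem~\ref{coro} identifies $g = S_s\phi$.

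To prove the coercivity I intend to combine three ingredients already available in the paper. Setting $u = \mathcal S_s\phi$, the pairing identity \eqref{paring} gives
$$\|u\|_{\dot H^s(\R)}^2 = -\langle \phi, S_s\phi\rangle \leq \|\phi\|_{H^{-s+\frac12}(\pa\Om)}\,\|S_s\phi\|_{H^{s-\frac12}(\pa\Om)}.$$
Next, exactly as in the proof of Lemma~\ref{lemma1}, the semigroup identity Proposition~\ref{prop2}(3) yields $\mathcal S_1\phi = I_{2-2s}(\mathcal S_s\phi) = I_{2-2s}u$, and the Fourier multiplier identity of Proposition~\ref{prop2}(2) gives the isometry $\|I_{2-2s}u\|_{\dot H^{2-s}(\R)} = \|u\|_{\dot H^s(\R)}$. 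Together with Sobolev embedding for $\dot H^s(\R)$ and the Hardy--Littlewood--Sobolev mapping for $I_{2-2s}$ in Proposition~\ref{prop2}(1), this can be promoted to an inhomogeneous bound $\|I_{2-2s}u\|_{H^{2-s}(B_R)} \leq c\,\|u\|_{\dot H^s(\R)}$ on any ball $B_R \supset \pa\Om$. Applying the trace statement Proposition~\ref{trace} via Remark~\ref{rem} then gives $\|S_1\phi\|_{H^{\frac32-s}(\pa\Om)} \leq c\,\|u\|_{\dot H^s(\R)}$. Finally, the classical bijectivity of $S_1 : H^{-\alpha}(\pa\Om) \to H^{1-\alpha}(\pa\Om)$ cited in the proof of Lemma~\ref{lemma1}, with $\alpha = s - \tfrac12 \in (0,\tfrac12)$, together with the open mapping theorem, gives
$$\|\phi\|_{H^{-s+\frac12}(\pa\Om)} \leq c\,\|S_1\phi\|_{H^{\frac32-s}(\pa\Om)} \leq c\,\|u\|_{\dot H^s(\R)}.$$
Substituting this back into the pairing inequality and dividing by $\|u\|_{\dot H^s(\R)}$ (the case $u\equiv 0$ being handled trivially, since then $\phi = 0$ too) yields $\|u\|_{\dot H^s(\R)} \leq c\,\|S_s\phi\|_{H^{s-\frac12}(\pa\Om)}$, and hence the desired coercivity.

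The delicate point is the promotion from the homogeneous identity $\|I_{2-2s}u\|_{\dot H^{2-s}(\R)} = \|u\|_{\dot H^s(\R)}$ to the full inhomogeneous $H^{2-s}(B_R)$ bound required by the trace theorem: one must also control $\|I_{2-2s}u\|_{L^2(B_R)}$. This will be done by combining the $L^p\to L^q$ mapping of $I_{2-2s}$ with the Sobolev embedding $\dot H^s(\R) \hookrightarrow L^{2n/(n-2s)}(\R)$ and the pointwise decay $|u(x)| \leq c|x|^{-n+2s}$ from \eqref{behavior2} to place $u$ in a suitable $L^p(\R)$. The exponent book-keeping is slightly dimension-sensitive but routine, and is the only nontrivial step beyond direct assembly of results already proved in the paper.
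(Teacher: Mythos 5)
Your argument is correct, but it takes a genuinely different route from the paper. The paper proves closed range indirectly: it argues by contradiction, normalizing a putative unbounded sequence $\{\phi_k\}$ to get $\Phi_k$ with unit norm and $S_s\Phi_k \to 0$, extracting a weakly convergent subsequence, using injectivity to force the weak limit to be $0$, and then exploiting the \emph{compact} embedding $H^{-s+2}(\Omega)\hookrightarrow H^1(\Omega)$ (together with the pairing identity to control the top-order part) to upgrade weak to strong convergence of $\mathcal S_1\Phi_k$ in $H^{-s+2}(\Omega)$, whence $\Phi_k\to 0$ by $S_1$-bijectivity, a contradiction. You instead prove the a priori (coercivity) estimate $\|\phi\|_{H^{-s+1/2}(\pa\Om)}\leq c\|S_s\phi\|_{H^{s-1/2}(\pa\Om)}$ directly, which is strictly stronger: it implies both injectivity (making Lemma~\ref{lemma1} a corollary rather than an ingredient) and closed range in one stroke, with no compactness or dichotomy argument. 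Both proofs rest on the same three pillars — the pairing identity \eqref{paring}, the semigroup factorization $\mathcal S_1\phi = I_{2-2s}\mathcal S_s\phi$, and the bijectivity of $S_1$ — but the paper replaces your Hardy--Littlewood--Sobolev bookkeeping for the $L^2(B_R)$ piece of $I_{2-2s}u$ by the compact Sobolev embedding, which is why it never has to be quantitative. Your HLS chain $\dot H^s(\R)\hookrightarrow L^{2n/(n-2s)}(\R)\xrightarrow{I_{2-2s}} L^q(\R)\hookrightarrow L^2(B_R)$ does close: the requirement $n>4-2s$ follows from $s>\tfrac12$ once $n\geq 3$, and $q\geq 2$ is automatic; the case $n=2$ is excluded in any event since the $S_1$ theory cited from \cite{FMM}, \cite{JK} and the identity in Lemma~\ref{lemma1} presuppose $\Ga_1$ is a Riesz (not logarithmic) kernel. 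Net effect: your proof is shorter, gives explicit control of the inverse, and could replace both Lemma~\ref{lemma1} and Lemma~\ref{lemm} at once; the paper's compactness route is softer and sidesteps the exponent arithmetic.
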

\begin{proof}
Suppose that $S_s \phi_k \ri f$ in $H^{s -\frac12}(\pa \Om)$
for some sequence $\{\phi_k\}$ in $H^{-s+\frac12}(\pa \Om)$. If
$\{\phi_k \}$ is bounded in $H^{-s+\frac12}(\pa \Om)$, then it is
done since there are a subsequence (we say
 $\{\phi_k\}$) and $\phi \in H^{-s+ \frac12}(\pa \Om)$ such that $\phi_k$ weakly converges to $\phi$ and we
 observe
\begin{eqnarray*}
<\psi, f> = \lim_{k \ri \infty}< \psi, S_s\phi_k> = \lim_{k \ri
\infty} \ll S_s^* \psi, \phi_k\gg = \ll S_s^* \psi, \phi\gg = < \psi,
S_s \phi>
\end{eqnarray*}
for all $\psi \in H^{-s+ \frac12} (\pa \Om)$, where $S_s^*:
H^{-s+\frac12}(\pa \Om) \ri (H^{-s +\frac12} (\pa \Om))^*$ is a dual
operator of $S_s$ and $\ll \cdot, \cdot \gg$ is the duality paring between
$(H^{-s+ \frac12}(\pa \Om))^*$ and $H^{-s+\frac12}(\pa \Om)$.
Hence we conclude that $S_s\phi=f$.

Now we would like to show that  $\{\phi_k \}$ cannot be unbounded.
Suppose that $\{\phi_k \}$ is unbounded in $H^{-s +\frac12} (\pa
\Om)$. Let $\Phi_k = \frac{\phi_k}{\|\phi_k\|_{H^{-s +
\frac12}(\pa \Om)}}$. Then $\|\Phi_k\|_{H^{-s + \frac12}(\pa \Om)}
=1$ for each $k$. Since $\{ \Phi_k \}$ is bounded in $H^{-s
+\frac12}(\pa \Om)$, there is a subsequence(we again say
$\{\Phi_k \}$) of $\{\Phi_k \}$  and $ \Phi \in H^{-s + \frac12
}(\pa \Om)$ such that $\{ \Phi_k \}$  converges weakly to $\Phi$ in
$H^{-s +\frac12}(\pa \Om)$. Since $S_s \Phi_k  \ri 0$ in $H^{s
-\frac12}(\pa \Om)$ and $S_s$ is one-to-one by Lemma \ref{lemma1},
we conclude that $\Phi =0$. Note that ${\mathcal S}_1 : H^{-s + \frac12} (\pa \Om) \ri H^{-s
+2 } (\Om)$ is bounded, $\frac12 < s < 1$, and $H^{-s+2 } (\Om)$
is compactly imbedded into $H^1(\Om)$ (see \cite{FMM} and
\cite{JK}). Hence $\Phi_k\ri 0$ weakly in $H^{-s+\frac{1}{2}}(\pa
\Om)$, $\frac{1}{2}<s<1$, implies
\begin{align}
\label{m2}{\mathcal S}_1 \Phi_k \ri 0\mbox{ in }H^1 (\Om).
\end{align}
Let $u_k = S_s \Phi_k$. Then, by \eqref{paring}, we get $\int_{\R}
|\xi|^{2s} |\hat{u}_k (\xi)|^2 d\xi=<\Phi_k,u_k> \ri 0$. Note that
\begin{align}\label{m1}\nonumber
&\int_{\R} \int_{\R} \frac{|{\mathcal S}_1
\Phi_k(x+y) -2{\mathcal S}_1 \Phi_k (x) + {\mathcal S}_1 \Phi_k
(x-y)|^2}{|y|^{n+2(2 -s)}}dydx\\
&=\int_{\R} |\xi|^{4
- 2s} |\widehat{{\mathcal S}_1 \Phi_k} (\xi)|^2 d\xi =\int_{\R} |\xi|^{2s} |\hat{u}_k (\xi)|^2 d\xi   \ri 0.
\end{align}

Combining \eqref{m1} and \eqref{m2}, we have ${\mathcal S}_1
\Phi_k \ri  0$ in $H^{-s +2 } (\Om)$.
This implies that $S_1 \Phi_k \ri 0 $ in $H^{-s +\frac32} (\pa
\Om)$. Note that $S_1 : H^{-s + \frac12} (\pa \Om) \ri H^{-s +
\frac32 } (\pa \Om)$ is bijective, $\frac12<s<1$ (see \cite{FMM}
and \cite{JK}). Hence, $\Phi_k \ri 0 $ in $H^{-s +\frac12} (\pa
\Om)$. But, it contradicts to the fact that $\|\Phi_k\|_{H^{-s + \frac12}(\pa
\Om)} =1$ for each $k$ and this again implies that $\{S_s\phi_k\}$ cannot be unbounded.
Therefore, we conclude that $S_s : H^{-s + \frac12} (\pa \Om)
\ri H^{s -\frac12} (\pa \Om)$ has a closed range.
\end{proof}

To show that $S_s : H^{-s+ \frac12}(\pa \Om) \ri H^{s-\frac12}(\pa
\Om)$ is bijective, it remains to show  $S_s$ has a dense range.
For the purpose of it, we show that dual operator $S_s^*: H^{-s +
\frac12}(\pa \Om) \ri (H^{-s + \frac12} (\pa \Om))^*$ of $S_s$ is
one-to-one.   Suppose that $S_s^* \phi =0 $. Then, we have
\begin{align*}
0=\ll S_s^* \phi, \psi\gg = <\phi, S_s \psi>
\end{align*}
for all $\psi \in H^{-s + \frac12}(\pa \Om)$, where $\ll\cdot, \cdot\gg$
is the duality paring between $H^{-s + \frac12}(\pa \Om)$ and $ (H^{-s
+ \frac12} (\pa \Om))^*$. Let $u = {\mathcal S}_s \phi$. By
\eqref{paring}, taking $\psi = \phi$, we have
\begin{align*}
0= < \phi, S_s \phi> = \int_{\R} |\xi|^{2s}|\hat{ u}(\xi)|^2 d\xi.
\end{align*}
This implies that $u=0$.
By Lemma \ref{lemma1}, we conclude that  $\phi =0$. Hence
$S_s^*$ is one-to-one.  This completes the proof of the invertibility
of $S_s$.

\section{Proof of Theorem \ref{theo2-1}}
\setcounter{equation}{0}

Let $f \in H^{s-\frac12}(\pa \Om)$. By Theorem \ref{theo1}, there is a $\phi \in H^{-s +\frac12}(\pa \Om)$ such that $S_s \phi = f$.
Let $u(x) = {\mathcal S}_s \phi (x)$ for $x \in \R$. Clearly, $u|_{\pa \Om} = S_s \phi =f$ on $\pa \Om$.
Moreover, by theorem \ref{rem2} and \eqref{solution}, we have that $\De^s u =0$ in $\R \setminus \pa \Om$ and $u \in \dot{H}^s(\R)$.
Hence, we showed the existence of the solution of equation \eqref{main result}. For the uniqueness, assume that $u$ is a solution of \eqref{main result}
such that $u|_{\pa \Om} =0$. Since $\De^s u \in \dot{H}^{-s}(\R) =(\dot{H}^{s}(\R))^*$ and $u|_{\pa \Om} =0$, we have
\begin{align*}
0= <\De^s u, u>  = -\int_{\R} (2\pi i|\xi|)^{2s}|\hat u(\xi)|^2 d\xi,
\end{align*}
where $< \cdot, \cdot>$ is the duality paring between $\dot{H}^s(\R)$ and $\dot{H}^{-s}(\R)$.
This implies that $\hat u =0$ and hence, $u$ is constant in $\R$. Since $u|_{\pa \Om} =0$, we have that $u \equiv 0$.
Hence, the solution of \eqref{main result} is unique. $\Box$

\end{document}